\numberwithin{equation}{section}
\newtheorem{theorem}{Theorem}[section]
\newtheorem{lemma}[theorem]{Lemma}
\newtheorem{proposition}[theorem]{Proposition}
\newtheorem{corollary}[theorem]{Corollary}
\theoremstyle{definition}
\newtheorem{definition}[theorem]{Definition}
\theoremstyle{remark}
\newtheorem{remark}[theorem]{Remark}
\numberwithin{equation}{section}
\newcommand{\C}{\mathbb{C}}
\newcommand{\Sp}{\mathbb{S}}
\newcommand{\R}{\mathbb{R}}
\newcommand{\Z}{\mathbb{Z}}
\newcommand{\N}{\mathbb{N}}
\newcommand{\U}{\operatorname{U}}
\newcommand{\SU}{\operatorname{SU}}
\newcommand{\SL}{\operatorname{SL}}
\newcommand{\SO}{\operatorname{SO}}
\newcommand{\GL}{\operatorname{GL}}
\newcommand{\diag}{\operatorname{diag}}
\newcommand{\co}{\colon\thinspace}
\newcommand{\git}{/\!\!/}
\newcommand{\bs}{\boldsymbol}
\newcommand{\modu}{\operatorname{mod}}
\newcommand{\pr}{\mathrm{pr}}
\newcommand{\sm}{\mathrm{sm}}
\newcommand{\codim}{\operatorname{codim}}
\begin{document}

\title{When is a symplectic quotient an orbifold?}
\author{Hans-Christian Herbig}
\address{Charles University in Prague, Faculty of Mathematics and Physics,
Sokolovsk\'{a} 83, 186 75 Praha 8, Czech Republic}
\email{herbig@imf.au.dk}

\author{Gerald W. Schwarz}
\address{Department of Mathematics, Brandeis University,
Waltham, MA 02454-9110, USA}
\email{schwarz@brandeis.edu}

\author{Christopher Seaton}
\address{Department of Mathematics and Computer Science,
Rhodes College, 2000 N. Parkway, Memphis, TN 38112, USA}
\email{seatonc@rhodes.edu}

\keywords{symplectic reduction, circle representations, $\SU_2$-representations, orbifolds}
\subjclass[2010]{Primary 53D20, 13A50; Secondary 57S15, 57S17, 20G20}
\thanks{The first author has been supported by the grant GA CR P201/12/G028.
The third author was supported by a Rhodes College Faculty Development Grant as well
as the E.C. Ellett Professorship in Mathematics.}
\begin{abstract}
Let $K$ be a compact
Lie group of positive dimension.  We show that for most unitary $K$-modules
the corresponding symplectic quotient is not
regularly symplectomorphic to a linear symplectic orbifold (the quotient of a unitary
module of a finite group).
When $K$ is connected, we show that even a symplectomorphism to a linear symplectic
orbifold does not exist.
Our results yield conditions that preclude the symplectic quotient of a
Hamiltonian $K$-manifold from being locally isomorphic to an orbifold.
As an application, we determine which unitary
$\SU_2$-modules yield symplectic quotients that
are $\Z$-graded regularly symplectomorphic to a linear symplectic orbifold.
We similarly determine which unitary circle representations yield symplectic
quotients that admit a regular diffeomorphism to a
linear symplectic orbifold.
\end{abstract}

\maketitle

\tableofcontents


\section{Introduction}
\label{sec:Intro}

It has been observed that occasionally, symplectic quotients can be identified with orbifolds
(see e.g. \cite{GotayBos,LMS,FarHerSea}). In this paper we present results that indicate that
this situation is rather the exception than the rule.
We
mostly restrict our
attention to the case where the symplectic quotient, respectively finite quotient, comes from
a unitary representation on a hermitian, finite dimensional  vector space.

A symplectic quotient is equipped in a canonical way with an algebra of smooth functions, the
so-called \emph{smooth structure}. This algebra carries a canonical Poisson bracket.  In the
case of a unitary representation, this Poisson algebra contains the algebra of
\emph{regular functions} on the symplectic quotient as a Poisson subalgebra. The algebra of
regular functions can be described as an affine $\R$-algebra by means of invariant theory.
As we consider only reduction at the zero level of the moment map, the Poisson algebra of
regular functions is actually $\N$-graded, the Poisson bracket being of degree $-2$. The
algebra of regular functions can typically not be used to fully recover the symplectic
quotient, but rather its Zariski closure.

By the \emph{Lifting Theorem} \cite[Theorem 6]{FarHerSea}, a regular (Poisson)
map between the Zariski closures of symplectic quotients
can be lifted
in a unique way to a (Poisson) algebra morphism between the algebras of
smooth functions as long as it is compatible with the Hilbert embeddings,
i.e. respects the inequalities defining the symplectic quotients as subsets
of their Zariski closures, see \cite[Definition 6(ii)]{FarHerSea}.
Note that these inequalities are the restrictions of those describing the orbit spaces,
see \cite{ProcesiSchwarz}.
A \emph{regular} (Poisson) map between symplectic quotients is a smooth
(Poisson) map that arises as such a lift.
Apart from symplectomorphisms,
we will consider various notions of equivalence between symplectic
quotients that are based on this Lifting Theorem, namely
graded regular symplectomorphism and regular diffeomorphism.
For a more detailed exposition of these notions we refer the reader to
Section \ref{sec:Background}.

A natural idea that can be used when studying  a unitary representation $K\to \U(V)$ of a
compact Lie group $K$ on a hermitian vector space $V$ is to complexify. That is, to observe
that the representation extends to
$G:=K_\C\to \GL(V)$ (see e.g \cite{GWSkempfNess}).
Note that the notation $\GL(V)$ will always denote $\GL_\C(V)$.
Our results are based on assuming properties for $V$ as a $G$-module.  We borrow the notions of $V$ being \emph{$k$-principal\/} or \emph{$k$-large\/} from \cite{GWSlifting}. See Definition \ref{def:2Principal}.
Roughly speaking, one can
say that
these conditions hold typically.
For instance, in the case of $G$ connected and
simple and $G$-modules $V$ with $V^G=\{0\}$,   all but finitely many isomorphism classes are
$2$-large (cf. \cite[Corollary 11.6]{GWSlifting}).
 Another condition that almost always holds is that $V$ has an open set of closed orbits.  In this case, we say that $V$ is \emph{stable}.

With these notions we are ready to formulate our first result.

\begin{theorem}
\label{thrm:2Principal}
Let $K$ be a
compact Lie group and $V$ a unitary $K$-module. Assume
that the image of $K$ in
$\GL(V)$ is
connected and
positive dimensional and that
the action of $G$ on $V$ is $2$-principal and stable.
Then there does not exist a symplectomorphism between the symplectic quotient $M_0$
and a linear symplectic orbifold.
\end{theorem}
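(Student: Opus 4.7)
The plan is to argue by contradiction. Suppose that there is a symplectomorphism $\phi\colon M_0 \to W/\Gamma$, where $\Gamma$ is a finite subgroup of $\U(W)$ acting linearly on the unitary module $W$. A symplectomorphism preserves all intrinsic features of the smooth Poisson structure, in particular the Sjamaar--Lerman stratification by symplectic leaves and the local cone structure at special points. My first step is to pin down a distinguished point of $M_0$: the image of $0\in V$, which is canonical because it is the only point left fixed by every dilation of $M_0$ coming from the scaling action on $V$. The image $\phi(0)\in W/\Gamma$ then lies at an orbifold point whose germ is modelled on $W'/\Gamma_0$ for some unitary representation of a finite isotropy group $\Gamma_0\subset \U(W')$. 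Replacing $W,\Gamma$ by $W',\Gamma_0$, the problem reduces to showing that the local Poisson germ of $M_0$ at $0$ is not isomorphic, as a smooth Poisson germ, to the germ of $W/\Gamma_0$ at the origin.

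The second step is to use the $2$-principal and stable hypotheses to describe $M_0$ near $0$ in enough detail to produce an obstruction. Stability guarantees that the generic $G$-orbit in $V$ is closed; together with $2$-principality this forces the principal isotropy to be trivial and the non-principal locus to have complex codimension at least $2$ in $V$. By the Marle--Guillemin--Sternberg local normal form at the zero orbit, the link of $0$ in $M_0$ is a compact stratified space whose principal stratum is an open dense symplectic manifold with complement of real codimension at least $4$, and the expected dimension of $M_0$ is $2(\dim_\C V-\dim_\C G)$. Furthermore, the $2$-principal hypothesis allows one to extract the local ring and its Poisson bracket from the linear representation of $K$ on $V$ in a uniform way, so that the germ at $0$ faithfully records the representation-theoretic data.

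The third step is to exhibit a smooth Poisson invariant of the germ at $0$ that is not realisable on the orbifold side. A finite linear quotient $W'/\Gamma_0$ always admits a finite unramified covering by the symplectic manifold $W'$ after removing the singular locus, so its local Poisson algebra is a finite module over the smooth Poisson algebra of a symplectic vector space; in particular the link is a spherical space form $S(W')/\Gamma_0$, and the orbit-type stratification is indexed by the conjugacy classes of subgroups of the finite group $\Gamma_0$. On the $M_0$ side, the positive-dimensional connected group $K$ produces an orbit-type stratification whose strata have dimensions and isotropy types dictated by the Lie-algebraic structure of $K$, and in particular the codimensions of strata are even and form a pattern incompatible with any finite group. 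I would make this precise by either comparing the links $(\mu^{-1}(0)\cap S(V))/K$ and $S(W')/\Gamma_0$ via their orbit-type decompositions, or by comparing the Poisson-theoretic local invariants (such as the rank function of the Poisson tensor along strata and the dimensions of isotropy algebras) which must agree across $\phi$. Connectedness of the image of $K$ is used here to rule out the possibility that these strata are themselves quotients of finite coverings.

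The main obstacle I expect is the last step: a plain symplectomorphism is far weaker than the graded regular symplectomorphism for which the Lifting Theorem applies, so algebraic invariants like Hilbert series are unavailable, and one is forced to work with intrinsic smooth/Poisson data. The crux is therefore to identify an invariant which is (i) manifestly preserved by any symplectomorphism, (ii) computable on $M_0$ from the $2$-principal, stable, positive-dimensional $K$-module $V$, and (iii) provably attains values on $M_0$ that no finite linear quotient can match. The $2$-principal hypothesis is what should give enough genericity for (ii), while stability and connectedness should be precisely what is needed for (iii).
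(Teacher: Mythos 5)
Your proposal has the right skeleton---argue by contradiction, use the fact that a symplectomorphism preserves the Sjamaar--Lerman orbit-type stratification, and exploit $2$-principality and stability to control the non-principal locus---but the decisive step is missing, and you say so yourself: you never identify the invariant that separates $M_0$ from every finite linear quotient. The candidates you float would not work. The codimensions of orbit-type strata of a finite unitary quotient are also even and can realize essentially arbitrary patterns, so ``a pattern incompatible with any finite group'' is not available; likewise the rank of the Poisson tensor along strata and the finiteness of the local ring as a module do not distinguish the two sides. Your localization at the image of $0$ is also unnecessary (and slightly misleading, since $M_0$ is a cone and the argument is most naturally global).

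The invariant the paper uses is the fundamental group of the principal stratum. On the quotient side, $2$-principality plus stability give that $(M/K)\smallsetminus(M/K)_{\pr}$ has real codimension at least $4$ (via the Kempf--Ness homeomorphism $(M/K)_{\pr}\cong Z_{\pr}$), so the same holds for $Y\smallsetminus Y_{\pr}$ and hence for $W\smallsetminus W_{\pr}$ in $W$; a general-position (piecewise-linear transversality) argument then shows $W_{\pr}$ is simply connected, so $W_{\pr}\to Y_{\pr}$ is the universal cover and $\pi_1(Y_{\pr})\simeq H$. On the $M_0$ side, after reducing to TPIG, $V_{\pr}\to Z_{\pr}$ is a principal $G$-bundle with $V_{\pr}$ simply connected (again by the codimension-$2$ complement), so the fibration exact sequence gives $\pi_1((M/K)_{\pr})\simeq\pi_1(Z_{\pr})\simeq\pi_0(G)$, which is trivial because the image of $K$ is connected. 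Comparing via the stratum-preserving homeomorphism forces $H$ to be trivial, whence $Y=Y_{\pr}$ and $M/K=(M/K)_{\pr}$, contradicting the positive-dimensionality of the image of $K$. To complete your write-up you would need to supply exactly this chain: the codimension-$4$ estimate transported to $W$, the simple connectivity of the complement of a codimension-$4$ algebraic subset, and the computation $\pi_1((M/K)_{\pr})\simeq\pi_0(K)$.
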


We observe that the theorem does not require
the symplectomorphisms  to be regular.
Now suppose that $X$ is a
Hamiltonian $K$-manifold and that $x\in M$, the zero set of the moment mapping. Then we have the symplectic slice  $(W,K_x)$ (see \cite[\S 6]{HerbigSchwarz} and references therein) and the symplectic quotient  $M_0$ of $X$ near the image $x_0$ of $x$ is isomorphic to the symplectic quotient $N_0$ of $W$ by $K_x$.
\begin{corollary}
\label{cor:KManifoldConnected}
Let $X$, $M_0$ etc.\ be as above. Suppose that the image of $K_x$ in
$\GL(W)$ is connected and positive dimensional and that the   action of $(K_x)_\C$ on $W$ is stable and $2$-principal. Then $M_0$ is not symplectomorphic to a linear symplectic orbifold in a neighborhood of $x_0$.
\end{corollary}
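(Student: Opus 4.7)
The plan is to reduce the corollary directly to Theorem \ref{thrm:2Principal} applied to the symplectic slice representation $K_x \to \U(W)$. By the local normal form for Hamiltonian actions (Marle--Guillemin--Sternberg), recalled in the sentence immediately preceding the corollary, a neighborhood of $x_0$ in $M_0$ is Poisson-isomorphic as a stratified symplectic space to a neighborhood of the distinguished point $0$ in $N_0$, the symplectic quotient of $W$ by $K_x$. So I may replace $(X, x_0)$ by $(W, 0)$ throughout, and it suffices to show that $N_0$ is not locally symplectomorphic at $0$ to any linear symplectic orbifold at its origin.

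Next I would verify that the hypotheses of Theorem \ref{thrm:2Principal} are met by the representation of $K_x$ on $W$: the image of $K_x$ in $\GL(W)$ is connected and positive-dimensional, and the action of $(K_x)_\C$ on $W$ is $2$-principal and stable, all by the standing assumptions. The theorem therefore asserts that there does not exist a symplectomorphism between $N_0$ and a linear symplectic orbifold.

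The one point requiring attention is the passage from this global non-existence statement to the local one claimed by the corollary. Both $N_0$ and any candidate linear symplectic orbifold $V/\Gamma$ are canonically conical: the scalar $\R_{>0}$-action on $W$ (resp.\ on $V$) commutes with the group action, descends to the quotient, and multiplies the symplectic form by a common positive weight. Consequently, a hypothetical symplectomorphism between a neighborhood of $0 \in N_0$ and a neighborhood of $0 \in V/\Gamma$ may be propagated to a global one by conjugating with the dilation flow: shrinking the source into a small neighborhood by a factor $t \ll 1$ and then rescaling the image by the inverse factor yields a symplectomorphism of ever-larger domains, and these are mutually compatible because each is obtained from the original by the same equivariance. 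Gluing produces a global symplectomorphism, contradicting Theorem \ref{thrm:2Principal}.

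The main obstacle is carrying out this conical extension rigorously, in particular checking that the scaled charts agree on overlaps and that the resulting global map is a bijection of the full quotients; this is essentially the observation that, for linear group actions at the zero level of the moment map, the symplectic quotient is determined by its germ at the vertex. With this verified, any local symplectomorphism near $x_0$ would yield a global one between $N_0$ and a linear symplectic orbifold, which Theorem \ref{thrm:2Principal} forbids.
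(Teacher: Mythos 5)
Your first two steps are exactly what the paper intends: the sentence preceding the corollary reduces everything via the symplectic slice to the linear model $(W,K_x)$, and the hypotheses of Theorem \ref{thrm:2Principal} hold for that model by assumption. The problem is your third step. Conjugating a local symplectomorphism $\phi$ by dilations does produce symplectomorphisms $\phi_t:=\delta_{1/t}\circ\phi\circ\delta_t$ of larger and larger domains (the factors $t^{2}$ and $t^{-2}$ on the symplectic forms cancel), but for $t\neq s$ the maps $\phi_t$ and $\phi_s$ agree on the overlap of their domains only if $\phi$ commutes with $\delta_{s/t}$ there. Nothing gives you that equivariance --- your phrase ``mutually compatible because each is obtained from the original by the same equivariance'' assumes exactly what has to be proved --- so the rescaled charts need not glue and no global symplectomorphism is produced; a germ of a symplectomorphism at the vertex of a cone need not be conical. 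This is a genuine gap, not a technicality to be ``verified.''

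The correct bridge from Theorem \ref{thrm:2Principal} to the local statement is not to globalize the map but to observe that the obstruction used in its proof is a germ invariant at the vertex. A symplectomorphism from a neighborhood of $x_0$ onto an open subset of a linear symplectic orbifold $W'/H$ still preserves connected components of orbit type strata by \cite[Proposition 3.3]{SjamaarLerman}, and after passing to a slice of $W'/H$ one may assume the image is a neighborhood of its vertex. Both the principal stratum of $N_0$ and $(W'/H)_{\pr}=W'_{\pr}/H$ are invariant under scaling, so arbitrarily small conical neighborhoods of the two vertices have principal parts that deformation retract onto the full principal strata, and such neighborhoods are cofinal among all neighborhoods of the vertices. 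Hence the local stratum-preserving homeomorphism already forces the ``local'' fundamental group of the principal stratum of $N_0$, which equals $\pi_1((N_0)_{\pr})$ and is trivial by Lemma \ref{lem:2PrincSimpConn} and the connectedness hypothesis, to coincide with $\pi_1((W'/H)_{\pr})\simeq H$ (the codimension-four estimate of Corollary \ref{cor:2PrincCodim} transfers locally for the same reason). So $H$ is trivial, the neighborhood of the vertex consists of principal orbits, and by conicality $K_x$ acts trivially on $W$ --- the same contradiction as in the proof of Theorem \ref{thrm:2Principal}. In short: your reduction is the right one, but the local-to-global passage must run through the homotopy invariants of the strata, not through an extension of the symplectomorphism itself.
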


In the case that the image of $K$ in
$\GL(V)$ is not connected,
we have the following, which under slightly stronger
hypotheses precludes the existence of a regular symplectomorphism to a linear orbifold.

\begin{theorem}
\label{thrm:KNotConnected}
Let $K$ be a compact Lie group and $V$ a unitary $K$-module. Assume
that the image of $K$ in
$\GL(V)$ is positive dimensional and that
the action of $G$ on $V$ is $2$-large.
Then there does not exist a regular symplectomorphism between the symplectic
quotient $M_0$ and a linear symplectic orbifold.
\end{theorem}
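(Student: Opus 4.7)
The plan is to reduce Theorem \ref{thrm:KNotConnected} to Theorem \ref{thrm:2Principal} by passing to a finite cover associated with the identity component of the image. Let $H\subseteq\U(V)$ denote the image of $K$, let $H^0$ be its identity component, and let $\tilde K\subseteq K$ be the preimage of $H^0$; this is a closed normal subgroup of finite index, and $\Gamma:=K/\tilde K$ is a finite group. Then $V$ is a unitary $\tilde K$-module whose image $H^0$ in $\GL(V)$ is connected and positive dimensional, and $\Gamma$ acts on $\tilde M_0:=V\git\tilde K$ by regular symplectomorphisms with quotient $M_0$. A routine check shows that $2$-largeness of $V$ as a $G$-module forces $V$ to be $2$-principal and stable as a $\tilde G:=\tilde K_\C$-module: stability descends because a closed $G$-orbit is a finite union of closed $\tilde G$-orbits, while triviality of the principal isotropy and the codimension estimates in Definition \ref{def:2Principal} persist since $\tilde G$ has finite index in $G$ and stabilizers do not enlarge upon restriction. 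Thus the hypotheses of Theorem \ref{thrm:2Principal} are met for $\tilde K$ acting on $V$.

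It then suffices to show that a hypothetical regular symplectomorphism $\phi\co M_0\to W/\Gamma_0$ to a linear symplectic orbifold would induce a symplectomorphism of $\tilde M_0$ with some other linear symplectic orbifold, contradicting Theorem \ref{thrm:2Principal}. Regularity of $\phi$ identifies the graded Poisson $\R$-algebra of regular functions on $M_0$ with the invariant algebra $\R[W]^{\Gamma_0}$ compatibly with Hilbert embeddings. Since $R(M_0)=R(\tilde M_0)^{\Gamma}$ and $\Gamma$ acts by regular symplectomorphisms on $\tilde M_0$, standard Galois-theoretic arguments in invariant theory identify $R(\tilde M_0)$ with $\R[W]^{\Gamma_1}$ for a normal subgroup $\Gamma_1\trianglelefteq\Gamma_0$ of index $|\Gamma|$; equivalently, $\tilde M_0$ realizes the $\Gamma$-cover of $W/\Gamma_0$ classified by the induced homomorphism $\Gamma_0\to\Gamma$. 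The Lifting Theorem \cite[Theorem 6]{FarHerSea}, applied in both directions, promotes this algebra isomorphism to a regular symplectomorphism $\tilde M_0\cong W/\Gamma_1$, which is the contradiction sought.

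The main obstacle will be the descent step in the last paragraph: verifying that the $\Gamma$-cover of the orbifold $W/\Gamma_0$ built on the algebraic level actually coincides with $\tilde M_0$ as a Hilbert-embedded semialgebraic subset of its Zariski closure, so that the Lifting Theorem delivers a genuine regular \emph{symplectomorphism} rather than merely an algebra isomorphism. This matching of defining inequalities is precisely where the regularity hypothesis on $\phi$ is essential---it is the regularity that allows the cover to be constructed algebraically and controls the inequalities cutting out the symplectic quotients---and it is also the place where the upgrade from $2$-principal plus stable to $2$-large in the hypotheses is expected to be used, since $2$-largeness survives passage to finite-index subgroups cleanly enough to control the stratifications on both sides of the would-be cover.
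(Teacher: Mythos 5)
Your reduction to the connected case is a reasonable opening move, and the first paragraph is essentially sound: stability and $2$-principality do descend to the finite-index subgroup $\tilde G$ (after first using Remark \ref{rem:StableTPIG} to reduce to TPIG), and $R(M_0)=R(\tilde M_0)^{\Gamma}$ holds since taking invariants of a finite group is exact. But the second paragraph contains the genuine gap, and you have correctly located it yourself without closing it: the assertion that ``standard Galois-theoretic arguments'' identify $R(\tilde M_0)$ with $\R[W]^{\Gamma_1}$ for a normal subgroup $\Gamma_1\trianglelefteq\Gamma_0$ is not standard and is where essentially all of the content of the theorem lives. To realize $\tilde M_0$ as an intermediate cover of $W/\Gamma_0$ dominated by $W$, you need (i) a surjection $\Gamma_0\to\Gamma$, which requires first proving $\pi_1((W/\Gamma_0)_{\pr})\simeq\Gamma_0$ and $\pi_1((M/K)_{\pr})\simeq\pi_0(K)$ and matching them via the stratum-preservation of the symplectomorphism (this is Lemma \ref{lem:2PrincSimpConn} plus the codimension-four estimates plus \cite[Proposition 3.3]{SjamaarLerman}, none of which you invoke); and (ii) an extension of the resulting covering isomorphism from the principal stratum across the singular strata to the whole quotient. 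Step (ii) fails to be automatic in the real semialgebraic category in which you are working: a ring isomorphism defined away from a codimension-two subset does not extend without a normality statement, and $M_0$ itself carries no obvious such structure.

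This is precisely why the paper's proof abandons the route through Theorem \ref{thrm:2Principal} and instead complexifies: $2$-largeness gives (via \cite[Theorem 2.2 and Corollary 4.3]{HerbigSchwarz}) that $M$ is Zariski dense in the normal variety $N=J_\C^{-1}(0)$ and that $\mathcal{J}_\C$ is generated by the components of $J_\C$, so the regular symplectomorphism complexifies to a Poisson isomorphism $X=N\git G\to (W\oplus W^\ast)/H$; the covering comparison is then carried out between $(X_0)_{\pr}=N_{\pr}\git G^0$ and $(W\oplus W^\ast)_{\pr}$, and normality of $X_0$ is what lets the isomorphism extend over the codimension-two complement to give $X_0\simeq W\oplus W^\ast$. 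The contradiction is then with Lemma \ref{lem:SympQuotSingular} (the complex symplectic quotient of a positive-dimensional connected group is never a polynomial algebra), not with Theorem \ref{thrm:2Principal}. Note that this also corrects your account of where $2$-largeness enters: it is not used to ``control the stratifications'' but to guarantee density of $M$ in $N$, generation of the ideal by the moment map, and normality of $N$ --- the three facts that make the complexification and the extension argument work. If you want to salvage your outline, you would need to replace the Galois step by this complexified covering argument, at which point the detour through $\tilde K$ and Theorem \ref{thrm:2Principal} becomes unnecessary: once you know $R(\tilde M_0)\otimes_\R\C\simeq\C[W\oplus W^\ast]$ is polynomial, Lemma \ref{lem:SympQuotSingular} finishes the proof directly.
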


In fact, when the action of $G$ on $V$ is $2$-large (or even $1$-large),
the real analytic ideal of the zero set of the moment map is generated by
the component functions of the moment map, see \cite[Corollary 4.3]{HerbigSchwarz}.
The proof of Theorem \ref{thrm:KNotConnected} can be used to show that,
with the same hypotheses, there can be no germ of a real analytic
symplectomorphism from a neighborhood of the origin of $M_0$ (the image
of $0$ in $V$) to a linear symplectic orbifold.
Applying the symplectic slice theorem and the fact that any symplectic
$K$-manifold admits a unique structure as a real analytic symplectic
$K$-manifold \cite{KutzschebauchLoose}, we have the following.
\begin{corollary}
Let $X$, $M_0$ etc.\ be as in Corollary \ref{cor:KManifoldConnected}.
Suppose that the image of $K_x$ in
$\GL(W)$ is positive dimensional and that the
action of $(K_x)_\C$ on $W$ is
$2$-large.
Then $M_0$ is not
real analytically symplectomorphic to a linear symplectic orbifold
in a neighborhood of $x_0$.
\end{corollary}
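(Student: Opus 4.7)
The plan is to reduce this corollary to the already-announced real analytic strengthening of Theorem \ref{thrm:KNotConnected}, exactly in parallel to how Corollary \ref{cor:KManifoldConnected} reduces to Theorem \ref{thrm:2Principal}, but now keeping careful track of the real analytic category rather than the merely smooth one. First I would invoke \cite{KutzschebauchLoose} to equip a $K$-invariant neighborhood of $x$ in $X$ with its canonical real analytic Hamiltonian $K$-structure, which is uniquely determined by the underlying smooth symplectic $K$-manifold structure. Since $x$ lies in the zero level of the moment map, the symplectic slice theorem then identifies a $K$-invariant neighborhood of the orbit $K\cdot x$ with a $K$-invariant neighborhood of the zero section in $K\times_{K_x} W$; in the real analytic setting this identification can be taken to be real analytic and symplectic. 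Consequently, a neighborhood of $x_0$ in $M_0$ is real analytically symplectomorphic (as germs) to a neighborhood of the image of $0$ in the symplectic quotient $N_0$ of $W$ by $K_x$.

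Next I would appeal to the strengthening of Theorem \ref{thrm:KNotConnected} stated in the paragraph following its proof: under the hypothesis that the image of $K_x$ in $\GL(W)$ is positive dimensional and $(K_x)_\C$ acts $2$-largely on $W$, the real analytic ideal of the zero set of the moment map for $W$ is generated by the component functions of the moment map (by \cite[Corollary 4.3]{HerbigSchwarz}). This is precisely the ingredient that replaces the role played by regularity in the algebraic setting, allowing one to rerun the argument of Theorem \ref{thrm:KNotConnected} with real analytic germs in place of regular maps. The conclusion is that no germ of a real analytic symplectomorphism from a neighborhood of the origin of $N_0$ to a linear symplectic orbifold exists.

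Combining these two steps, if $M_0$ were real analytically symplectomorphic to a linear symplectic orbifold in a neighborhood of $x_0$, then composing with the real analytic symplectic slice identification above would produce a germ of a real analytic symplectomorphism from a neighborhood of the origin of $N_0$ onto a linear symplectic orbifold, contradicting the preceding paragraph.

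The two places where care is needed are: (i) ensuring that the symplectic slice theorem really can be executed in the real analytic category over the germ neighborhood of $x_0$, which is where \cite{KutzschebauchLoose} is essential, since it guarantees that the canonical real analytic structure is compatible with the Hamiltonian $K$-action and hence with its Marle--Guillemin--Sternberg normal form; and (ii) verifying that the proof of Theorem \ref{thrm:KNotConnected} transfers verbatim from the regular to the real analytic setting once the $2$-large hypothesis is used to identify the vanishing ideal of the moment-map zero set, as the authors already assert. I expect (ii) to be the main substantive point, but it is flagged in the paper as an immediate byproduct of the argument for Theorem \ref{thrm:KNotConnected} together with \cite[Corollary 4.3]{HerbigSchwarz}.
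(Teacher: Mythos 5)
Your proposal is correct and follows essentially the same route as the paper: the authors likewise combine the real analytic strengthening of Theorem \ref{thrm:KNotConnected} (justified via \cite[Corollary 4.3]{HerbigSchwarz} identifying the real analytic vanishing ideal of the moment map zero set) with the symplectic slice theorem and the uniqueness of the real analytic structure from \cite{KutzschebauchLoose}. The two caveats you flag are exactly the points the paper leaves as assertions, so nothing is missing relative to the paper's own treatment.
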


The next theorem addresses circle representations and applies to representations whose
complexifications are not $2$-principle.

\begin{theorem}
\label{thrm:Circle}
Let $K = \Sp^1$.  Let $V$ be a unitary $K$-module with $V^K = \{ 0 \}$ such that the
corresponding symplectic quotient $M_0$ has real dimension at least $4$.  Then there
does not exist a regular diffeomorphism between $M_0$ and a linear symplectic orbifold.
\end{theorem}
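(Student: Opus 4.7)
The plan is to argue by contradiction: suppose $\phi \colon M_0 \to W/\Gamma$ is a regular diffeomorphism to a linear symplectic orbifold, where $\Gamma$ is a finite subgroup of $\U(n)$ acting on a hermitian $W = \C^n$. First, replace $V$ by its effective quotient so the $S^1$-weights $a_1, \ldots, a_N$ of $V$ satisfy $\gcd(a_1, \ldots, a_N) = 1$; the hypotheses $V^K = \{0\}$ and $\dim_\R M_0 \geq 4$ then force $N \geq 3$, all $a_i \neq 0$, and both positive and negative weights to appear. Since both $\phi$ and $\phi^{-1}$ are regular, the Lifting Theorem supplies mutually inverse $\R$-algebra isomorphisms between the regular-function algebras $\R[V]^K/(\mu)$ and $\R[W]^\Gamma$ that are compatible with the Hilbert embeddings, and any invariant of these algebras as $\R$-algebras must therefore coincide.

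Next, I would pin down the image of the origin. The point $0 \in M_0$ is characterised algebraically as the $\R$-point corresponding to the augmentation ideal of the canonical $\N$-grading (equivalently, the unique fixed point of the residual $\R_{>0}$-scaling action), and an analogous description applies to $[0] \in W/\Gamma$. Compatibility with the Hilbert-embedding inequalities rigidifies this enough to force $\phi(0) = [0]$. Consequently $\phi^\ast$ maps the maximal ideal $\mathfrak{m}$ of $\R[W]^\Gamma$ to the maximal ideal of $\R[V]^K/(\mu)$, inducing an isomorphism of associated graded rings $\mathrm{gr}_\mathfrak{m}\R[W]^\Gamma \cong \mathrm{gr}_\mathfrak{m}\R[V]^K/(\mu)$ and hence of their Hilbert series.

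The core of the proof is then an explicit Hilbert-series computation and comparison. On the orbifold side, the series of $\mathrm{gr}_\mathfrak{m}\R[W]^\Gamma$ is the Molien series of the finite group $\Gamma$, subject to strong constraints: palindromic numerator (reflecting the Gorenstein/duality property of $\C[W]^\Gamma$) and denominator of the form $\prod_i(1 - t^{d_i})$ for the degrees $d_i$ of fundamental invariants. On the circle side, since $\R[V]^K$ is a domain and $\mu$ is a nonzero element, $\mu$ is a non-zero-divisor, so the series of $\R[V]^K/(\mu)$ equals $(1 - t^2) \cdot M_V^\R(t)$ where $M_V^\R$ is the real Molien series for $S^1$ acting on $V$ with weights $(a_i)$. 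One then checks that for every admissible weight configuration the resulting series violates one of the Molien constraints required on the orbifold side.

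The hard part will be handling this case analysis uniformly in the weight data. When the complexified representation is $2$-principal, one can largely mimic the strategy of Theorem~\ref{thrm:KNotConnected} (adapted to the weaker notion of regular diffeomorphism, which is precisely what forces the passage to associated graded rings at the origin). When it is not, one must exploit the special structure of non-$2$-principal circle representations, in which some weight appears with multiplicity exceeding what is needed for the generic principal isotropy to be trivial; here the Molien computation together with the hypothesis $\dim M_0 \geq 4$ should rule out an orbifold form of the series. A secondary subtlety is that a regular diffeomorphism does not preserve the Poisson bracket, so no Poisson invariants are available; the whole argument must be carried out purely at the level of the $\R$-algebra of regular functions and its associated graded ring at the origin.
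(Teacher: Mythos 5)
There is a genuine gap, and it sits at the heart of your plan. Your strategy is to transfer Hilbert--series/Molien constraints across the isomorphism $\phi^\ast$ by passing to the associated graded rings at the cone points, but the $\mathfrak{m}$-adic associated graded ring of a graded algebra does \emph{not} remember the Molien series: it resets every minimal generator to filtration degree one. For example, $\R[x]$ with $\deg x=2$ and $\R[x]$ with $\deg x=4$ have isomorphic $\mathfrak{m}$-adic associated graded rings, so your claim that ``the series of $\mathrm{gr}_{\mathfrak m}\R[W]^\Gamma$ is the Molien series of $\Gamma$'' is false, and the palindromicity and $\prod_i(1-t^{d_i})$ constraints you want to exploit are invisible to an ungraded isomorphism of local rings. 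This is precisely why the Hilbert-series method only ever produced the weaker statement that no \emph{$\Z$-graded} regular symplectomorphism exists (the result of \cite{HerbigSeaton2} cited right after Theorem \ref{thrm:Circle}); your proposal essentially re-derives that argument and the proposed extension to ungraded regular diffeomorphisms collapses at this step. Two further problems: your identification $\phi(0)=[0]$ via ``the augmentation ideal of the canonical $\N$-grading'' is circular, since a regular diffeomorphism gives no graded isomorphism and hence no preferred augmentation ideal on either side (this can be repaired via orbit-type or singular-locus arguments, but not as written); and the Gorenstein/palindromicity claim for $\C[W]^\Gamma$ requires $\Gamma\le\SL(W)$, which is not assumed --- the correct duality statement (Watanabe) applies to $\C[W\oplus W^\ast]^\Gamma$, where the determinants cancel. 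Finally, the ``core'' case analysis over all weight configurations is only asserted, never carried out.

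The paper's actual proof avoids gradings and Hilbert series entirely and is topological. Tensoring $\chi^\ast$ with $\C$ gives an isomorphism of affine varieties $X=N\git G\to Y=(W\oplus W^\ast)/H$, where $N=J_\C^{-1}(0)$; since the smooth locus equals the principal stratum on both sides (Lemma \ref{lem:S1CIsotropyTypes} for $X$, \cite[Theorem 9.12]{GWSlifting} for $Y$), this restricts to $X_{\pr}\simeq Y_{\pr}$ --- an invariant of the ungraded algebra, which is exactly what a regular diffeomorphism preserves. Codimension-two arguments give $\pi_1(Y_{\pr})\simeq H$ finite and $\pi_2(Y_{\pr})=0$; the $G$-fibration $N_{\pr}\to X_{\pr}$ then forces $\pi_2(N_{\pr})=0$ and $\pi_1(N_{\pr})$ infinite. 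On the other hand, $N^\ast=N\smallsetminus\{0\}$ is smooth and fibers over a smooth quadric hypersurface $Q\subset\C\mathbb{P}^{2n-1}$; the Lefschetz hyperplane theorem ($n\ge 3$) gives $\pi_1(Q)=0$, $\pi_2(Q)=\Z$, and the homotopy exact sequence forces $\pi_1(N^\ast)$ to be finite, contradicting $\pi_1(N^\ast)=\pi_1(N_{\pr})$ infinite. If you want to salvage your approach, you would need an invariant of the local ring at the cone point that survives the loss of the grading; the paper's lesson is that the fundamental group of the smooth locus of the complexified quotient is such an invariant, whereas the Hilbert series is not.
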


In \cite{HerbigSeaton2}, a weaker version of Theorem \ref{sec:Circle} was presented,
demonstrating that $M_0$ cannot admit a $\Z$-graded regular symplectomorphism to a
linear symplectic orbifold.

In the case that $K = \Sp^1$ and $\dim_\R(M_0) = 2$, an explicit $\Z$-graded regular symplectomorphism
was given in \cite[Section 4.3]{FarHerSea} between $M_0$ and the orbifold $\C/\Z_m$ where $m$
can be computed from the weights.  Combining these results with Theorem \ref{thrm:Circle}
yields a complete answer to the question of which $\Sp^1$-linear symplectic quotients admit
a regular diffeomorphism to a linear symplectic orbifold.  Moreover, it follows that an
$\Sp^1$-linear symplectic quotient admits a regular diffeomorphism to a linear symplectic
orbifold if and only if it admits a $\Z$-graded regular symplectomorphism to a linear
symplectic orbifold.

Finally, in the case of $K=\SU_2$, we are also able to give a complete result.  In this case,
the task of working through the list of non-$2$-principle cases is not too demanding. Recall
that the irreducible modules of $K=\SU_2$ are given by the
spaces $R_d$ of   binary
forms of degree $d$.  We demonstrate the following.

\begin{theorem}
\label{thrm:SU2}
Let $K = \SU_2$, and let $V$ be a
nontrivial
unitary $K$-module
with $V^K=\{0\}$.  Then the corresponding
symplectic quotient $M_0$ is graded regularly symplectomorphic to a linear
symplectic orbifold if and only if $V$ is isomorphic to
$R_1$, $R_1\oplus R_1$, $R_2$,
$R_3$, or $R_4$.
\end{theorem}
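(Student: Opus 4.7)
My plan is to treat the two directions of the equivalence separately: the ``only if'' direction reduces to a finite case analysis after applying Theorem~\ref{thrm:2Principal}, while the ``if'' direction requires exhibiting an explicit graded regular symplectomorphism for each of the five listed modules.

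For the ``only if'' direction, I first observe that any nontrivial $V$ with $V^K = \{0\}$ has positive-dimensional and connected image in $\GL(V)$, since $\SU_2$ is connected. Hence, whenever $G = \SL_2(\C)$ acts $2$-principally and stably on $V$, Theorem~\ref{thrm:2Principal} immediately rules out any symplectomorphism, and \emph{a fortiori} any graded regular symplectomorphism, to a linear symplectic orbifold. The classification results of \cite{GWSlifting} guarantee that all but finitely many $\SU_2$-module decompositions $V = R_{d_1} \oplus \cdots \oplus R_{d_s}$ are $2$-large, in particular $2$-principal and stable. Working through the resulting finite list of ``small'' modules, I would either verify $2$-principality and stability by direct inspection (again invoking Theorem~\ref{thrm:2Principal}), or, for the genuine exceptions lying outside $\{R_1, R_1 \oplus R_1, R_2, R_3, R_4\}$, exhibit a specific obstruction---most naturally by comparing Hilbert series of the $\N$-graded Poisson algebras of regular functions, since a graded regular symplectomorphism would induce a graded Poisson isomorphism of these.

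For the ``if'' direction, I would construct an explicit graded regular symplectomorphism for each of the five listed modules. For $R_1$ and $R_2$, a dimension count shows $M_0$ is a single point, which is trivially a linear symplectic orbifold. For the remaining three cases the quotient has real dimension $2$ or $4$, and I would identify each with an explicit $\C^n/\Gamma$ for a finite subgroup $\Gamma \subset \U_n$ acting linearly and symplectically. Concretely, I would compute a minimal set of $\SU_2$-invariant generators (holomorphic, antiholomorphic, and mixed), determine their relations and Poisson brackets, and match these to the Poisson-graded invariant ring of a suitable finite quotient---here classical binary-form invariant theory is likely to identify the relevant orbifolds for $R_3$ and $R_4$, while $R_1 \oplus R_1$ reduces essentially to a determinantal description.

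The main obstacle is the case analysis for the borderline exceptional modules in the ``only if'' direction, where Theorem~\ref{thrm:2Principal} does not apply and a hands-on obstruction must be produced. The positive constructions are also delicate because one must verify not merely a graded Poisson-algebra isomorphism but its compatibility with the Hilbert embedding---i.e.\ with the semialgebraic inequalities cutting out $M_0$ from its Zariski closure---in order to lift to a bona fide graded regular symplectomorphism via the Lifting Theorem \cite[Theorem 6]{FarHerSea}.
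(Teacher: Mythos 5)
Your overall architecture matches the paper's: reduce via Theorem \ref{thrm:2Principal} to a finite list of non-$2$-principal modules, obstruct the exceptional cases outside the five listed, and construct explicit equivalences for the five. However, there are genuine gaps. First, your claim that ``a dimension count shows $M_0$ is a single point'' for $R_2$ is false: the principal isotropy group of $\SL_2$ acting on $R_2$ (the adjoint representation, up to the kernel $\{\pm I\}$) is one-dimensional, so $\dim_\C V\git G = 1$ and $M_0$ is a two-real-dimensional cone, in fact $\Z$-graded regularly symplectomorphic to $\C/(\pm 1)$ (the paper handles this in Remark \ref{rem:R_2} by restriction to a Cartan/fixed-point subspace, not by a dimension count). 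Only $R_1$ gives a point. Second, your proposed obstruction for the genuine exceptions --- which are $2R_2$ and $R_2\oplus R_1$ --- is underspecified. A Hilbert series comparison must be carried out against \emph{all} finite groups $H$ and unitary $H$-modules $W$ of the appropriate dimension; the paper makes this effective by counting quadratic invariants via the decomposition of $W$ into real irreducibles ($\sum_j\binom{\mu_j+1}{2}$). For $2R_2$ the count of ten quadratic invariants is indeed unattainable by a unitary $W$ of complex dimension $3$, but for $R_2\oplus R_1$ the count of four \emph{is} attained by $W\simeq 2W_1\oplus W_2$, and the paper must then produce a second obstruction by comparing the number of real codimension-two orbit type strata (two for $W/H$ versus one for $M_0$). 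Your plan as stated would stall at exactly this point.

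On the positive side, your proposal to compute generators, relations, and Poisson brackets by hand for $R_3$ and $R_4$ is viable in principle (the paper's closing remark confirms the isomorphisms can be made explicit this way, including verification of the semialgebraic inequalities via the methods of \cite{ProcesiSchwarz}), but the paper instead uses structural arguments that avoid the Hilbert-embedding compatibility issue you flag as the main obstacle: the Luna--Richardson theorem identifies $M_0$ for $R_4$ with $\C^2/S_3$ by restricting to the fixed-point set of the principal isotropy group, and for $R_3$ (and $2R_1$) the one-dimensionality of $V\git G$ makes $V$ polar, so restriction to a Cartan subspace $\mathfrak{c}\subset M$ yields the identification with $\mathfrak{c}/\Gamma$. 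In both cases the symplectomorphism property comes essentially for free from the restriction of the invariant symplectic form, which is a cleaner route than matching Poisson brackets generator by generator.
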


The outline of this paper is as follows.  In Section \ref{sec:Background},
we fix notation and recall the definitions and results we will need.  In
Section \ref{sec:2Principal}, we consider $K$-modules $V$ such that the action
of $G$ on $V$ is $2$-principal
or $2$-large
and prove
Theorems \ref{thrm:2Principal} and \ref{thrm:KNotConnected}.
In Section \ref{sec:Circle}, we consider the case $K = \Sp^1$ and prove Theorem
\ref{thrm:Circle}.  We restrict our attention to the case $K = \SU_2$ in
Section \ref{sec:SU2} and inspect each of the representations that are not
addressed by Theorem \ref{thrm:2Principal}, see Equation \eqref{eq:SU2candidates},
proving Theorem \ref{thrm:SU2}.


\section*{Acknowledgements}

We would like to thank Reyer Sjamaar for facilitating this collaboration and
Leonid Bedratyuk for assistance in computing the invariants of $\SL_2$.


\section{Background}
\label{sec:Background}

We use the following notation throughout this paper.
Let $K$ be a compact Lie group and let $V$ be a unitary $K$-module.
Let $J\co V \to \mathfrak{k}^\ast$ denote the homogeneous quadratic moment map
where $\mathfrak{k}$ denotes the Lie algebra of $K$ and $\mathfrak{k}^\ast$ its dual,
and let $M := J^{-1}(0)$ denote the zero-fiber of $J$.  The symplectic quotient of
$V$ by $K$ is given by $M_0 := M/K$.  This space is a \emph{symplectic stratified space},
see \cite{SjamaarLerman}, where the stratification is
by orbit type.
It is a \emph{differential space} with smooth structure given by
$\mathcal{C}^\infty(M_0) := \mathcal{C}^\infty(V)^K/\mathcal{I}_M^K$ where
$\mathcal{I}_M$ denotes the ideal of smooth functions vanishing on $M$
and $\mathcal{I}_M^K := \mathcal{I}_M \cap \mathcal{C}^\infty(V)^K$.
The algebra $\mathcal{C}^\infty(M_0)$ inherits a Poisson bracket $\{\, ,\,\}$
from $\mathcal{C}^\infty(V)$ with respect to which
$(M_0, \mathcal{C}^\infty(M_0), \{\, ,\,\})$ is a \emph{Poisson differential space},
see \cite[Definition 5]{FarHerSea}.
We define the \emph{(Poisson) algebra of regular functions on $M_0$}, denoted $\R[M_0]$,
to be the $\Z$-graded (Poisson) subalgebra of $\mathcal{C}^\infty(M_0)$ given by
the image of $\R[V]^K \subset \mathcal{C}^\infty(V)^K$ via the quotient map
$\mathcal{C}^\infty(V)^K \to \mathcal{C}^\infty(V)^K/\mathcal{I}_M^K$.
Let $\mathcal{J}$ denote the vanishing ideal of $J^{-1}(0)$ in $\R[V]$, and then
$\R[M_0]$ is given by $\R[V]^K/\mathcal{J}^K$
where $\mathcal{J}^K := \mathcal{J} \cap\R[V]^K$.
Note that this coincides
with defining the regular functions on $M_0$ in terms of a \emph{global chart}
defined using a minimal generating set for $\R[V]^K$; see \cite[Definition 7]{FarHerSea}.

A \emph{diffeomorphism} between differential spaces
$(M_0, \mathcal{C}^\infty(M_0))$ and $(N_0, \mathcal{C}^\infty(N_0))$ is a homeomorphism
$\chi\co M_0 \to N_0$
such that pull-back via $\chi$ induces an isomorphism $\chi^\ast\co \mathcal{C}^\infty(N_0) \to \mathcal{C}^\infty(M_0)$.
 A \emph{symplectomorphism} between Poisson differential spaces is a
diffeomorphism $\chi$ such that $\chi^\ast$ is an isomorphism of Poisson algebras.
If $M_0$ and $N_0$
are symplectic quotients of unitary representations equipped with algebras of regular functions
as above,
then we say that a diffeomorphism (or symplectomorphism) is \emph{regular} if $\chi^\ast$
restricts to an isomorphism $\R[N_0] \to \R[M_0]$ and \emph{$\Z$-graded} if
this isomorphism preserves the $\Z$-gradings of $\R[N_0]$ and $\R[M_0]$.
It is easy to see that a ($\Z$-graded) regular symplectomorphism induces an isomorphism
between global charts defined in terms of minimal generating sets as above,
and hence that these definitions coincide with those given in \cite[Section 4]{FarHerSea}.
The analogs of the above concepts can be defined for the real analytic structure on $M_0$
defined in terms of the real analytic functions on $V$.

A \emph{linear symplectic orbifold} is a quotient $W/H$ where $H$ is a finite group
and $W$ is a unitary $H$-module.  Note that in this case, the moment map is trivial
so that $W/H$ is the corresponding symplectic quotient.
By a \emph{linear orbifold}, we mean a quotient
$W/H$ where $H$ is finite and $W$ is a (real) $H$-module.

Let $G := K_\C$ denote the complexification of $K$, and then $V$ is as well a $G$-module.
Let $Z:= V\git G = \operatorname{Spec}(\C[V]^G)$ denote the GIT quotient. Then there is a  Zariski open dense subset $Z_{\pr}$
such that the corresponding closed orbits in $V$ have isotropy group conjugate to a fixed reductive subgroup $H$ of $G$.
We call $H$ a \emph{principal isotropy group\/} of $V$ and we denote the inverse image of $Z_{\pr}$ in $V$ by $V_{\pr}$.
We say that $V$ has  \emph{FPIG} (resp.\  \emph{TPIG\/}) if the principal isotropy group $H$ is  finite (resp.\ trivial).
 We say that $V$ is \emph{stable\/}
 if there is an open dense subset of closed orbits. Equivalently, $V_\pr$ consists of closed orbits.
 Note that FPIG implies stable.

We recall the following; see
\cite[Section 0.3]{GWSlifting}.
For a
$G$-variety $X$, we let $X_{(\ell)}$ denote the set
of points $x \in X$ such that the isotropy group $G_x$ has complex dimension
$\ell$ and define the \emph{modularity} $\modu(X,G)$ to be the maximum
of $\dim_\C X_{(\ell)} - \dim_\C G + \ell$.

\begin{definition}
\label{def:2Principal}
Let $G$ be a complex
reductive
group and $V$ a $G$-module.
We say that $V$ is \emph{$k$-principal} if the complex codimension of
$V\smallsetminus V_{\pr}$ is at least
$k$. We say that $V$ is \emph{$k$-modular}
if
it has FPIG and
$\modu(V\smallsetminus V_{(0)}, G) \leq \dim_\C V\git G - k$.
This is equivalent to the condition that $V$ has FPIG and $\codim V_{(\ell)}\geq k+\ell$ for all $\ell\geq 1$,
see \cite[Remark 9.6(3)]{GWSlifting}.
If $V$ is both $k$-principal and $k$-modular, we say it is \emph{$k$-large}.
\end{definition}

Note that the definition of
$k$-principal differs slightly from that of \cite{GWSlifting}
in that we do not require that $V$ has FPIG.

\begin{remark}\label{rem:StableTPIG}
Let $V$ be a $2$-principal $G$-module. If $V$ is stable, then the principal isotropy group $H$ is the kernel of
$G\to\GL(V)$ \cite[Corollary 7.7(2)]{GWSlifting}.
Hence if $G$ acts faithfully on $V$, then $V$ has TPIG.
\end{remark}

Let $(z_1, \ldots, z_n)$ denote a choice of complex coordinates for $V$.  Using real
coordinates $(z_1, \ldots, z_n, \overline{z_1},\ldots,\overline{z_n})$, the action of
$k \in K$ is given by $\diag(k, (k^{-1})^{t})$.  Complexifying the underlying real structure
yields $V_\C:= V\otimes_\R \C $
which is isomorphic as a
$K$-module to
$V\oplus V^\ast$.
We let $G$ act on $V\oplus V^\ast$ as the complexification of the action of $K$.
Using the corresponding complex coordinates
$(z_1,\ldots,z_n,w_1,\ldots,w_n)$ for $V_\C$, the real points of $V_\C$ are those such that
$\overline{z_i} = w_i$ for each $i$.  Note that the natural isomorphism
$\R[V] \otimes_\R \C \simeq \C[V_\C]$ restricts to an isomorphism from
$\R[V]^K \otimes_\R \C$ to $\C[V_\C]^G$; see \cite[Proposition 5.8]{GWSliftingHomotopies}.

Let
\begin{equation}
\label{eq:CMomentGeneral}
    J_\C\co V\oplus V^\ast\longrightarrow \mathfrak{g}^\ast
\end{equation}
denote the complexification of the moment map, where
$\mathfrak{g} = \mathfrak{k} \oplus i\mathfrak{k}$ is the
Lie algebra of $G$. Let $N := J_\C^{-1}(0) \subset V\oplus V^\ast$,
and let $\mathcal{J}_\C$ denote the vanishing ideal of $N$ in $\C[V\oplus V^\ast]$.
Considering $V$ as a subset of $V_\C$ as above, we have by
\cite[Corollaries 4.2 and 4.3]{HerbigSchwarz} that if $V$ is $1$-large as a $G$-module,
then $M = J^{-1}(0)$ is Zariski dense in $N$
and $\mathcal{J}_\C$ is generated by the component functions of $J$.  Then as $K$ is Zariski
dense in $G$, it follows that $\R[M_0] \otimes_\R\C$ is isomorphic to
$\C[V\oplus V^\ast]^G/\mathcal{J}_\C^G$ where
$\mathcal{J}_\C^G := \mathcal{J}_\C \cap \C[V\oplus V^\ast]^G$.
Hence, $\R[M_0] \otimes_\R\C$ is the coordinate ring of the complex algebraic variety
$N\git G$, which we refer to as the \emph{complex symplectic quotient}.

\begin{lemma}
\label{lem:SympQuotSingular}
If $G$ acts nontrivially and $V\git G$ is not a point, then
$\C[V\oplus V^\ast]^G/\mathcal{J}_\C^G$ is not a polynomial algebra.
Hence if $V$ is $1$-large as a $G$-module, then $\R[M_0]$
is not a polynomial algebra.
\end{lemma}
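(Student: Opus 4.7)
Since $\R[M_0]\otimes_\R\C \cong B := \C[V\oplus V^*]^G/\mathcal{J}_\C^G$ when $V$ is $1$-large, the ``hence'' clause follows from showing $B$ is not a polynomial $\C$-algebra. The plan is to prove $\dim_\C(\mathfrak{m}/\mathfrak{m}^2) > \operatorname{Krull}(B) = \dim N\git G$, where $\mathfrak{m}$ is the positively-graded maximal ideal of $B$. Any trivial summand $V^G\subseteq V$ splits off a polynomial tensor factor $\C[V^G\oplus(V^G)^*]$ of $B$, so at the outset I reduce to the case $V^G = 0$.

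Two ingredients drive the count. First, because $J_\xi(v,w) = \langle w,\rho(\xi)v\rangle$ is bilinear, both $V$ and $V^*$ lie in $N = J_\C^{-1}(0)$; the projections $V\oplus V^*\to V$ and $V\oplus V^*\to V^*$ thus induce surjections $B\to\C[V]^G$ and $B\to\C[V^*]^G$, split by pullback inclusions $\sigma\colon\C[V]^G\hookrightarrow B$ and $\tau\colon\C[V^*]^G\hookrightarrow B$. Second, the commuting $\C^*$-action $(t,(v,w))\mapsto(tv,t^{-1}w)$ preserves both the symplectic form and the moment map, endowing $B$ with an additional bigrading by $(\deg_z,\deg_w)$ in which $\sigma(\C[V]^G)$ occupies bidegrees $(*,0)$ and $\tau(\C[V^*]^G)$ occupies $(0,*)$. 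The inclusions $\sigma$ and $\tau$ induce injections $\mathfrak{m}_V/\mathfrak{m}_V^2 \hookrightarrow \mathfrak{m}/\mathfrak{m}^2$ and $\mathfrak{m}_{V^*}/\mathfrak{m}_{V^*}^2 \hookrightarrow \mathfrak{m}/\mathfrak{m}^2$ on graded cotangent spaces: given a relation $\sigma(f)\equiv\sum g_ih_i\pmod{\mathcal{J}_\C^G}$ with $g_i,h_i$ of positive degree, restriction to $V$ kills $\mathcal{J}_\C^G$ and yields $f = \sum (g_i|_V)(h_i|_V) \in \mathfrak{m}_V^2$. Bidegree separation then makes the two images linearly disjoint in $\mathfrak{m}/\mathfrak{m}^2$, giving $\dim(\mathfrak{m}/\mathfrak{m}^2) \geq \mu_V + \mu_{V^*} \geq \dim V\git G + \dim V^*\git G \geq \dim N\git G$.

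The strict inequality is produced by the canonical mixed invariant $\Omega := \sum_i z_iw_i \in (V^*\otimes V)^G \cong \operatorname{End}_G(V)$ corresponding to $\id_V$, an element of bidegree $(1,1)$. The degree-$2$ piece of $\mathcal{J}_\C^G$ is $\rho(\mathfrak{g}^G)\subset\operatorname{End}_G(V)$; if $\id_V$ lay there, some central $\xi\in\mathfrak{g}^G$ would act as scalar multiplication on $V$, generating a $\C^*$-subgroup of $G$ forcing $\C[V]^G = \C$ and $V\git G$ to be a single point, contradicting the hypothesis. Since $V^G = 0$ there are no degree-$1$ invariants in $B$, so $\mathfrak{m}^2$ is trivial in bidegree $(1,1)$, making the class $[\Omega]\in\mathfrak{m}/\mathfrak{m}^2$ nonzero and linearly independent of the $(*,0)$ and $(0,*)$ contributions. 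Therefore $\dim(\mathfrak{m}/\mathfrak{m}^2) \geq \dim N\git G + 1 > \dim N\git G$, and $B$ fails to be polynomial. The main obstacle is showing that $\Omega$ is a genuinely new independent generator; the bidegree decomposition and the reduction to $V^G = 0$ together make this a clean check.
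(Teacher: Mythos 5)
Your overall route is the same as the paper's: both arguments embed $\C[V]^G$ and $\C[V^\ast]^G$ into $B$ using that the components of $J_\C$ vanish on $V$ and on $V^\ast$, and then show that the invariant dual pairing $\Omega$ is one minimal generator too many for $B$ to be a polynomial algebra; your bidegree and cotangent-space computation is a careful expansion of the step the paper dismisses as ``easily seen.'' The genuine gap is your assertion that the bidegree-$(1,1)$ part of $\mathcal{J}_\C^G$ is $\rho(\mathfrak{g}^G)$. That identification would be correct if $\mathcal{J}_\C$ were the ideal \emph{generated} by the components of $J_\C$, but the paper defines $\mathcal{J}_\C$ as the vanishing ideal of $N$; the two agree when $V$ is $1$-large \cite[Corollary 4.3]{HerbigSchwarz}, i.e.\ in the setting of the ``hence'' clause, but not in the generality of the first sentence, where $\mathcal{J}_\C$ may contain invariant $(1,1)$-quadratics beyond the span of the $J_\xi$. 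For instance, for $G=\SL_2$ and $V=R_1$ one computes $N=(V\times 0)\cup(0\times V^\ast)$, so $\Omega$ itself lies in $\mathcal{J}_\C^G$ and $[\Omega]=0$ in $\mathfrak{m}/\mathfrak{m}^2$.

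This matters because your opening reduction to $V^G=0$ does not preserve the hypothesis that $V\git G$ is not a point, which is exactly what your central-$\xi$ step consumes: the $R_1$ situation above arises from the legitimate input $V=\C_{\mathrm{triv}}\oplus R_1$. Even more simply, take $K=\Sp^1$ with weight vector $(0,1)$: then $G$ acts nontrivially and $V\git G\simeq\C$ is not a point, yet $\C[V\oplus V^\ast]^G=\C[z_1,w_1,z_2w_2]$, $\mathcal{J}_\C^G=(z_2w_2)$, and $B\simeq\C[z_1,w_1]$ is polynomial. So the first sentence fails as literally stated, and no elaboration of your argument can close the gap without an added hypothesis such as stability or $1$-largeness; in fairness, the paper's own proof elides the same point, since in this example the image of $\Omega$ is $z_1w_1$, which \emph{does} lie in the subalgebra $\mathcal{A}$ generated by $\C[V]^G$ and $\C[V^\ast]^G$. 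In the $1$-large setting---which is the only one in which the paper actually invokes the lemma---your proof does go through, provided you add that $1$-largeness passes to the nontrivial summand, that FPIG and stability then force the reduced quotient to have positive dimension, and that the final inequality $\dim V\git G+\dim V^\ast\git G\ge\dim N\git G$, which you assert without justification, holds (with equality) because $N$ is then an irreducible complete intersection of dimension $2\dim V-\dim G$ by \cite[Theorem 2.2]{HerbigSchwarz} while FPIG gives $\dim V\git G=\dim V-\dim G$.
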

\begin{proof}
As $J$ is real, it is easy to see that each component of $J_\C$
is contained in the kernels of the projections
$\C[V\oplus V^\ast]\to \C[V]$ and $\C[V\oplus V^\ast]\to \C[V^\ast]$.
Therefore, $\C[V\oplus V^\ast]^G/\mathcal{J}_\C^G$ contains both
$\C[V]^G$ and $\C[V^\ast]^G$, which generate a subalgebra
$\mathcal{A}$ of dimension $2\dim V\git G$.  However, as
the image of the invariant dual pairing in $\C[V\oplus V^\ast]^G/\mathcal{J}_\C^G$
is easily seen to not be an element of $\mathcal{A}$,
it follows that the number of minimal generators of
$\C[V\oplus V^\ast]^G/\mathcal{J}_\C^G$ is greater than its dimension.
\end{proof}

To conclude this section, we demonstrate the following elementary fact that
we will use several times in the sequel.

\begin{lemma}
\label{lem:Codim2Homotopy}
Let $U$ be a piecewise linear $n$-dimensional submanifold of a real vector space
with $n\geq 4$.
Let $C$ be a real algebraic subset of $U$ with $\dim_\R C \leq n - 4$.
Let $\mathcal{O}:= U\smallsetminus C$ and choose a base point in $\mathcal{O}$. Then the inclusion $\mathcal{O}\to U$ induces isomorphisms
$\pi_1(\mathcal{O}) \simeq \pi_1(U)$
and
$\pi_2(\mathcal{O}) \simeq \pi_2(U)$.
\end{lemma}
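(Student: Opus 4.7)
The plan is a standard PL general position argument driven by the codimension bound on $C$. Since $C$ is real algebraic it is triangulable, and $U$ is already given as PL, so we may choose a triangulation of the ambient vector space in which $U$ appears as a PL $n$-manifold and $C$ as a closed PL subset of real dimension at most $n-4$. The basepoint is automatically in the open set $\mathcal{O}$, and I will work entirely in the PL category from this point on.

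For surjectivity of $\pi_k(\mathcal{O}) \to \pi_k(U)$ with $k \in \{1,2\}$: given a based continuous map $f \colon S^k \to U$, simplicial approximation replaces $f$, up to based homotopy, by a based PL map $\tilde f$. Because
\[
\dim S^k + \dim C \leq 2 + (n-4) = n - 2 < n,
\]
PL general position produces an arbitrarily small homotopy of $\tilde f$ (fixing the basepoint) whose image misses $C$, giving a representative of $[f]$ inside $\mathcal{O}$. For injectivity: given based PL maps $f_0, f_1 \colon S^k \to \mathcal{O}$ and a based homotopy $H \colon S^k \times I \to U$ between them, approximate $H$ by a PL map rel $S^k \times \{0,1\}$ and apply general position rel the boundary. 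Since
\[
\dim(S^k \times I) + \dim C \leq 3 + (n-4) = n - 1 < n,
\]
we can homotope $H$ off $C$ without changing its boundary values, producing a homotopy from $f_0$ to $f_1$ inside $\mathcal{O}$.

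The main (and essentially only) obstacle is the bookkeeping needed to execute PL general position inside $U$ while respecting the basepoint and the boundary $S^k \times \{0,1\}$; this is routine once a compatible triangulation is in place, and the substantive content of the argument is precisely the codimension inequality $\dim C + \dim(\mathrm{source}) < \dim U$ in both the map-extension and homotopy-extension steps. Note that the hypothesis $n \geq 4$ is exactly what makes this inequality available for $k=2$.
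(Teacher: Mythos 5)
Your proof is correct and follows essentially the same route as the paper: triangulate $C$ (citing triangulability of real algebraic sets), then apply PL general position with the dimension counts $k+(n-4)<n$ and $(k+1)+(n-4)<n$ for $k\in\{1,2\}$. The only cosmetic difference is that the paper implements general position via an ambient isotopy of $U$ fixing $f(\Sp^k)$ and pushing the image of a disk off $C$, whereas you homotope the maps directly rel basepoint/boundary; the content is identical.
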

\begin{proof}
Let $k \in \{1,2\}$, let $f\co \Sp^k \to \mathcal{O}$ be a continuous map from the
$k$-sphere into $\mathcal{O}$
that extends to a map $\overline{f}\co\mathbb{D}^{k+1}\to U$
on the unit disk.  Up to homotopy, we may assume that
$\overline{f}$ is piecewise linear so that
$P:=\overline{f}(\mathbb{D}^{k+1})$ is a polyhedron with subpolyhedron
$P_0:=f(\Sp^k)$.  Note that we may assume that $C$ is a polyhedron as well by
\cite[Theorem 2.12]{ParkSuhTriang}.  Then by \cite[Section 5.3]{RourkeSanderson},
there is an isotopy of $U$ that fixes $P_0$ and finishes with a map
$h\co U\to U$ such that $h(P - P_0) \cap C = \emptyset$.  Therefore,
$P \subset \mathcal{O}$, from which we conclude that $f$ is trivial in $\pi_k(\mathcal{O})$.
Hence $\pi_k(\mathcal{O})\to\pi_k(U)$ is injective. Similarly, given $f\co\Sp^k\to U$ we may find a homotopy
to an $f'\co\Sp^k\to\mathcal{O}$ preserving the base point. Hence $\pi_k(\mathcal{O})\simeq\pi_k(U)$.
\end{proof}


\section{Symplectic quotients associated to $2$-principal actions}
\label{sec:2Principal}

In this section, we demonstrate Theorem \ref{thrm:2Principal}, that if $K$
has connected, positive dimensional image in
$\GL(V)$ and $V$ is a $2$-principal stable
$(G = K_\C)$-module, then the corresponding
symplectic quotient $M_0$ does not admit a symplectomorphism to a linear symplectic orbifold.
We also demonstrate Theorem \ref{thrm:KNotConnected}, which applies to the case that
the image of $K$ in
$\GL(V)$ is not connected and indicates hypotheses under which
$M_0$ does not admit a regular symplectomorphism to a linear symplectic orbifold.
First, we make some observations about the associated Hilbert embedding
in the general context introduced in Section \ref{sec:Background}.

Let $V$ be a $G$-module.  Choose a minimal
homogeneous generating set $p_1, \ldots, p_d$ for $\C[V]^G$, let
$p = (p_1, \ldots, p_d)\co V \to \C^d$ denote the corresponding Hilbert map,
and identify the image $p(V)\subset \C^d$ of $p$ with the
GIT quotient $Z:=V\git G$.  Then the restriction of $p$ to $M$ induces the \emph{Kempf-Ness
homeomorphism} $\overline{p} \co M/K \to Z$; see \cite{KempfNess,GWSkempfNess}.
We recall the following.

\begin{lemma}
\label{lem:KempfNessStratPres}
The mapping $\overline{p}\co M/K \to V\git G$ is an orbit type stratum-preserving homeomorphism where
the orbit type in $M/K$ corresponding to the isotropy group $L \leq K$ is mapped to the
orbit type in $V\git G$ corresponding to the isotropy group $L_\C \leq G$, the complexification of $L$.
\end{lemma}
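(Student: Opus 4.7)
The plan is to reduce the lemma to the standard isotropy calculation in Kempf--Ness theory, after which the stratum-preserving assertion becomes essentially formal. Concretely, I would carry out three steps.

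First, I would invoke the Kempf--Ness theorem \cite{KempfNess,GWSkempfNess} to obtain the homeomorphism $\overline{p}\co M/K \to V\git G$ itself: each closed $G$-orbit in $V$ meets $M$ in precisely one $K$-orbit, every $K$-orbit in $M$ lies on a closed $G$-orbit, and closed $G$-orbits parameterize the points of $Z = V \git G$ via the Hilbert map $p$. This step requires no new work.

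Second, and this is the key technical point, I would identify isotropy groups. Let $x \in M$. Then $Gx$ is closed and $x$ minimizes the norm along $Gx$. A standard consequence of Kempf--Ness (see e.g.\ \cite{GWSkempfNess}) is that $G_x$ is reductive, that $K_x = K \cap G_x$ is a maximal compact subgroup of $G_x$, and hence that $G_x = (K_x)_\C$. This is the only nontrivial input and the main potential obstacle; however, it is well documented in the literature.

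Finally, I would read off stratum preservation. The Sjamaar--Lerman stratification of $M_0 = M/K$ is indexed by $K$-conjugacy classes of the isotropies $K_x$ for $x \in M$, while the Luna stratification of $V \git G$ is indexed by $G$-conjugacy classes of the isotropies $G_y$ at points $y$ of the corresponding closed orbit. Since conjugation by elements of $K$ (equivalently, of $G$) commutes with complexification, the assignment $L \mapsto L_\C$ is well defined on conjugacy classes of those compact subgroups of $K$ that arise as isotropies, and the identification $G_x = (K_x)_\C$ from step two matches the two stratifications under $\overline{p}$. Once the isotropy identification is in hand, everything else is bookkeeping.
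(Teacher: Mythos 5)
Your proof follows the paper's own route exactly: the paper's entire proof consists of citing \cite[Proposition 1.3(ii)]{DadokKac} for the identity $G_v = (K_v)_\C$ when $v \in M$, and \cite[Proposition 5.8(3)]{GWSliftingHomotopies} for the matching of conjugacy classes --- precisely your steps two and three. The only thing you undersell is step three, which is not pure bookkeeping: the implication that $G$-conjugacy of the complexified isotropy groups forces $K$-conjugacy of the compact isotropy groups (needed so that distinct orbit-type strata of $M/K$ do not land in a single Luna stratum of $V\git G$) is the actual content of the second citation, though it is a standard fact.
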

\begin{proof}
By \cite[Proposition 1.3(ii)]{DadokKac}, we have that if $v \in M$, then $G_v = (K_v)_\C$.
Then by \cite[Proposition 5.8(3)]{GWSliftingHomotopies}, the $G$-conjugacy
classes of $G$-isotropy groups coincide with the $K$-conjugacy classes of $K$-isotropy
groups.
\end{proof}

Now assume that the action of $G$ on $V$ is $2$-principal and
stable.
Then $Z\smallsetminus Z_{\pr}$ has complex codimension
$2$ in $Z$, see \cite[(Section 6.2 and Remark 9.6(4)]{GWSlifting}.  Letting $(M/K)_{\pr}$
denote the set of orbits in $M/K$ with principal isotropy, Lemma \ref{lem:KempfNessStratPres}
implies that $\overline{p}$ restricts to a homeomorphism $(M/K)_{\pr}\to Z_{\pr}$,
from which we have the following.

\begin{corollary}
\label{cor:2PrincCodim}
Suppose that the action of $G$ on $V$ is $2$-principal and
stable.
Then $(M/K)\smallsetminus(M/K)_{\pr}$ has real codimension at least $4$.
\end{corollary}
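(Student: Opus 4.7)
The plan is to transfer the codimension statement from the complex GIT quotient $Z = V\git G$ to the symplectic quotient $M/K$ by means of the Kempf-Ness homeomorphism $\bar p$ of Lemma \ref{lem:KempfNessStratPres}. The corollary follows almost immediately from three ingredients already on hand: the fact (cited in the paragraph immediately preceding the statement) that $Z\smallsetminus Z_{\pr}$ has complex codimension at least $2$ in $Z$ when $V$ is $2$-principal, the stratum-preserving nature of $\bar p$, and a translation of complex codimension into real codimension across the homeomorphism.

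More concretely, I would first invoke \cite[Section 6.2, Remark 9.6(4)]{GWSlifting} to record that $Z\smallsetminus Z_{\pr}$ has complex codimension $\ge 2$, hence real codimension $\ge 4$, in $Z$. Next I would apply Lemma \ref{lem:KempfNessStratPres}: the map $\bar p$ carries the $K$-orbit type with isotropy $L$ homeomorphically onto the $G$-orbit type with isotropy $L_\C$. Under the standing assumptions that $V$ is stable and $2$-principal, the principal $G$-isotropy $H$ corresponds (up to conjugacy) via this complexification bijection to the principal $K$-isotropy on $M$, so $\bar p$ restricts to a homeomorphism $(M/K)_{\pr} \to Z_{\pr}$ and hence to a homeomorphism on the complements.

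Finally, I would conclude by comparing dimensions. Since $\bar p$ is a stratum-preserving homeomorphism whose restriction to each stratum is a real-analytic map between smooth manifolds of equal real dimension, the real dimension of each stratum of $(M/K)\smallsetminus(M/K)_{\pr}$ matches that of the corresponding stratum of $Z\smallsetminus Z_{\pr}$. Combined with $\dim_\R(M/K) = \dim_\R Z$, this yields real codimension at least $4$ in $M/K$.

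The only delicate point is the correspondence of principal strata in the second step: one must be sure that the stratum we call $(M/K)_{\pr}$ is indeed $\bar p^{-1}(Z_{\pr})$, i.e., that the principal $K$-isotropy on $M$ corresponds under complexification to the principal $G$-isotropy on $V$. This is handled by Lemma \ref{lem:KempfNessStratPres} together with the identification $G_v = (K_v)_\C$ for $v \in M$ (from \cite[Prop.~1.3(ii)]{DadokKac}). The remaining dimension-matching step is routine given that each stratum of $M/K$ is a smooth symplectic manifold whose image under $\bar p$ is a locally closed complex subvariety of $Z$ of half its real dimension.
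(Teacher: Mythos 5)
Your proposal is correct and follows essentially the same route as the paper: the paper derives the corollary directly from the codimension-two statement for $Z\smallsetminus Z_{\pr}$ (citing the same reference in \cite{GWSlifting}) together with the stratum-preserving Kempf--Ness homeomorphism of Lemma \ref{lem:KempfNessStratPres} restricting to $(M/K)_{\pr}\to Z_{\pr}$. The extra care you take with matching real dimensions across the homeomorphism is a reasonable elaboration of what the paper leaves implicit.
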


Similarly, we have the following.

\begin{lemma}
\label{lem:2PrincSimpConn}
Suppose that the action of $G$ on $V$ is $2$-principal and stable.
Then
$\pi_1((M/K)_{\pr})\simeq\pi_0(G)\simeq\pi_0(K)$.
\end{lemma}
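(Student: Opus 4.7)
The plan is to reduce the computation via the Kempf--Ness homeomorphism to the topology of the open stratum of closed principal orbits in $V$ itself. By Lemma \ref{lem:KempfNessStratPres}, $\overline{p}$ restricts to a homeomorphism $(M/K)_{\pr}\to Z_{\pr}$. Since $V$ is stable, all orbits in $V_{\pr}$ are closed, so $Z_{\pr}$ is identified with the geometric quotient $V_{\pr}/G$, and the task reduces to computing $\pi_1(V_{\pr}/G)$.

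To control the topology of $V_{\pr}$, I invoke the $2$-principal hypothesis: the complement $V\smallsetminus V_{\pr}$ has complex codimension at least $2$, hence real codimension at least $4$, in the vector space $V$. Lemma \ref{lem:Codim2Homotopy}, applied with $U=V$ and $C=V\smallsetminus V_{\pr}$, then gives $\pi_1(V_{\pr})=\pi_1(V)=0$ and $\pi_2(V_{\pr})=\pi_2(V)=0$, and in particular $V_{\pr}$ is path-connected. Next, by Remark \ref{rem:StableTPIG}, stability together with $2$-principality forces the principal isotropy $H$ to coincide with $\Kern(G\to\GL(V))$. Thus $H$ is a normal complex reductive subgroup acting trivially, and the image group $G':=G/H$ acts freely on $V_{\pr}$. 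Consequently $V_{\pr}\to V_{\pr}/G$ is a principal $G'$-bundle.

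The long exact homotopy sequence of this principal bundle collapses, using the vanishing of $\pi_1(V_{\pr})$ and $\pi_2(V_{\pr})$ and the connectedness of $V_{\pr}$, to an isomorphism
\begin{equation*}
\pi_1\bigl(V_{\pr}/G\bigr)\;\simeq\;\pi_0(G').
\end{equation*}
It then remains to identify $\pi_0(G')\simeq\pi_0(G)\simeq\pi_0(K)$. The second isomorphism is immediate from the polar decomposition, which furnishes a diffeomorphism $G\simeq K\times i\mathfrak{k}$ and hence a homotopy equivalence $K\hookrightarrow G$.

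The first identification $\pi_0(G/H)\simeq\pi_0(G)$ is the delicate point and is where I expect the main technical obstacle to lie: it amounts to showing that the reductive normal subgroup $H=\Kern(G\to\GL(V))$ is contained in the identity component $G_0$, equivalently that the connecting map $\pi_0(H)\to\pi_0(G)$ in the long exact sequence of $H\to G\to G'$ is trivial. I would handle this by a structural argument: for a stable, $2$-principal action, every component of $G$ must act nontrivially on $V$, since otherwise the fixed locus of a nontrivial coset of $H$ in $G$ would produce a non-principal stratum of complex codimension at most $1$ in $V$, contradicting $2$-principality. Alternatively, one may replace $(G,K)$ throughout by their images in $\GL(V)$ and apply the polar decomposition directly to these image groups, reading off $\pi_0$ of the effective complexification and its maximal compact.
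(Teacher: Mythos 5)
Your argument is essentially the paper's proof: reduce via Lemma \ref{lem:KempfNessStratPres} to computing $\pi_1(Z_{\pr})$, use Remark \ref{rem:StableTPIG} to pass to an effective action, note that $V_{\pr}\to Z_{\pr}$ is then a principal bundle for the acting group (the paper cites \cite[Theorem 6.10(3)]{PopovVinberg} here, which you should too, since ``free action of a reductive group with closed orbits'' does not give a principal bundle for free), and run the homotopy exact sequence with $\pi_1(V_{\pr})=\pi_1(V)=0$ from Lemma \ref{lem:Codim2Homotopy}. The $\pi_2$ computation is not needed for this lemma: the segment $\pi_1(V_{\pr})\to\pi_1(Z_{\pr})\to\pi_0(G)\to\pi_0(V_{\pr})$ already gives the bijection.

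The one place you go beyond the paper is the identification $\pi_0(G/H)\simeq\pi_0(G)$ for $H=\Kern(G\to\GL(V))$, and your primary ``structural argument'' for it is wrong. An element $g\in H$ lying outside the identity component fixes \emph{all} of $V$, so it creates no non-principal stratum and there is no tension with $2$-principality: take $G=\SL_2\times\Z/2\Z$ acting on a $2$-principal stable $\SL_2$-module with the second factor acting trivially; this is still $2$-principal and stable, $H=\{1\}\times\Z/2\Z\not\subseteq G^0$, and $\pi_1(Z_{\pr})$ is trivial while $\pi_0(G)=\Z/2\Z$. So the literal conclusion $\pi_1((M/K)_{\pr})\simeq\pi_0(G)$ can only hold for the \emph{images} of $G$ and $K$ in $\GL(V)$ — which is precisely your fallback, is what the paper does silently in its first sentence, and is consistent with how the lemma is invoked later (the hypotheses of Theorems \ref{thrm:2Principal} and \ref{thrm:KNotConnected} are phrased in terms of the image of $K$). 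Keep the fallback; delete the structural argument.
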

\begin{proof}
By Remark \ref{rem:StableTPIG}, replacing $G$ by its image in
$\GL(V)$, we reduce to
the case that $V$ has TPIG.
Then $V_{\pr} \to Z_{\pr}$ is a principal $G$-bundle,
see \cite[Theorem 6.10(3)]{PopovVinberg}.
From the exact sequence of a fibration, we obtain the exact sequence
\[
    \pi_1(V_{\pr}) \to \pi_1(Z_{\pr}) \to \pi_0(G)
    \to \pi_0(V_{\pr}).
\]
By Lemma  \ref{lem:Codim2Homotopy}, $\pi_1(V_{\pr})\simeq\pi_1(V)$ is trivial, and
$\pi_0(V_{\pr})$ is clearly trivial as well so that $\pi_1(Z_{\pr}) \to \pi_0(G)$
is a bijection.
As $(M/K)_{\pr}$ is homeomorphic to $Z_{\pr}$ by Lemma \ref{lem:KempfNessStratPres},
the claim follows.
\end{proof}

\begin{proof}[Proof of Theorem \ref{thrm:2Principal}]
We again apply Remark \ref{rem:StableTPIG} to reduce to the case that $V$ has TPIG.
Assume
that there is a symplectomorphism
$\chi\co M_0 \to W/H$ where $H$ is a finite group and $W$ a unitary
$H$-module.  Let $Y:= W/H$, and then as
$\chi$ is a symplectomorphism and hence preserves
connected components of
orbit type strata by
\cite[Proposition 3.3]{SjamaarLerman}, we have $\chi((M/K)_{\pr}) = Y_{\pr}$
is the
set of principal orbits  of $Y$.
It follows from Corollary
\ref{cor:2PrincCodim} that $Y\smallsetminus Y_{\pr}$ has
real codimension
at least $4$ in $Y$ and hence that $W\smallsetminus W_{\pr}$ has
real
codimension at least $4$ in $W$, where $W_{\pr}$ denotes the
set of principal orbits.
By Lemma \ref{lem:Codim2Homotopy}, $\pi_1(W_{\pr})$ is trivial,
and hence
$\pi_1(Y_{\pr}) \simeq H$.

Recalling that $\chi$ restricts to a homeomorphism from
$(M/K)_{\pr}$ to
$Y_{\pr}$, it follows that $\pi_1((M/K)_{\pr}) = \pi_1(Y_{\pr}) = H$.  As
$(M/K)_{\pr}$ is simply connected by Lemma \ref{lem:2PrincSimpConn}
and the fact that the image of $K$ in
$\GL(V)$ is connected,
it follows that
$H$ is trivial.  But then $Y_{\pr} = Y$ so that $M/K = (M/K)_{\pr}$, implying that
$K$
acts trivially on $V$.
This yields a contradiction, completing the proof.
\end{proof}

\begin{remark}
We observe that the proof of Theorem \ref{thrm:2Principal} demonstrates the stronger fact
that with the same hypotheses, even an orbit type stratum-preserving homeomorphism to a
linear orbifold, which need not be symplectic, does not exist.
\end{remark}

\begin{proof}[Proof of Theorem \ref{thrm:KNotConnected}]
As above, we reduce to the case that $V$ has TPIG by Remark \ref{rem:StableTPIG}.
Assume
there is a regular symplectomorphism $\chi\co M_0 \to W/H$ with $H$ finite and $W$ a
unitary $H$-module, and then we have a Poisson isomorphism $\chi^\ast\co\R[W]^H\to \R[M_0]$.
As in the proof of Theorem \ref{thrm:2Principal}, $\chi$ preserves connected components of
orbit type strata and hence restricts to a regular diffeomorphism
$(M/K)_{\pr}\to (W/H)_{\pr}$.  Similarly, as $(M/K)\smallsetminus(M/K)_{\pr}$
has real codimension at least four in $M/K$ so that $W\smallsetminus W_{\pr}$
has real codimension at least four in $W$, we have again that $W_{\pr}$ is simply
connected and $\pi_1((W/H)_{\pr}) \simeq H$.
By Lemma \ref{lem:2PrincSimpConn}, $\pi_1((M/K)_{\pr})\simeq\pi_0(K)$.
Since $(M/K)_{\pr}$ and $(W/H)_{\pr}$
are homeomorphic, we have that $\Gamma:=\pi_0(K)\simeq H$.

Recall from Section \ref{sec:Background} that
by \cite[Corollary 4.2]{HerbigSchwarz},
$M$ is Zariski dense in $N$
so that tensoring with $\C$ yields an isomorphism
$\chi_\C^\ast\co\C[W\oplus W^\ast]^H\to \C[V\oplus V^\ast]^G/\mathcal{J}_\C^G$.
Letting $X:=N\git G$ denote the complex symplectic quotient and
$Y:=(W\oplus W^\ast)/H$, we then have an isomorphism $\chi_\C\co X \to Y$.
The complexifications of the symplectic forms on $W$ and $V$ yield (complex)
symplectic forms on $W\oplus W^\ast$ and $V\oplus V^\ast$, respectively,
and one checks that the corresponding Poisson brackets on
$\C[W\oplus W^\ast]^H$ and $\C[V\oplus V^\ast]^G$ are the complexifications
of the respective Poisson brackets on $\R[W]^H$ and $\R[V]^K$.  Moreover,
the ideals of $M$ and $N$ are generated by
the component functions of
the appropriate
$J$ by
\cite[Corollary 4.3]{HerbigSchwarz} so the Poisson bracket on $\C[V\oplus V^\ast]^G$
induces a well-defined Poisson bracket on $\C[V\oplus V^\ast]^G/\mathcal{J}_\C^G$
with respect to which $\chi_\C^\ast$ is by construction a Poisson isomorphism.

We claim that the respective Poisson structures on
$\C[V\oplus V^\ast]^G/\mathcal{J}_\C^G$ and $\C[W\oplus W^\ast]^H$
determine $X_{\pr}$ and $Y_{\pr}$, which we demonstrate using
techniques similar to those of \cite{Gonc}.
Let $f \in \C[V\oplus V^\ast]^G$ and let $(v,w) \in N$
with closed $G$-orbit and isotropy group $L$.  Then $df(v,w)$ is fixed by $L$
so that the Hamiltonian vector field of $f$ is tangent to $(V\oplus V^\ast)^L$
at $(v,w)$.  If $(v,w)$ has (trivial) principal isotropy group, then the
differentials of the components of the moment map are linearly independent
at $(v,w)$, which is therefore a smooth point of $N$.  Then the slice representation
at $(v,w)$ is trivial, see \cite{LunaSlice}, so that the Hamiltonian vector fields
span the tangent space of $X$ at the orbit of $(v,w)$.  It follows that the Hamiltonian
flows do not leave $X_{\pr}$,
and every pair of points in $X_{\pr}$ can be connected by broken flow lines so that
$X_{\pr}$ is determined by the Poisson structure on $\C[V\oplus V^\ast]^G/\mathcal{J}_\C^G$.
Similarly, the Poisson structure on $\C[W\oplus W^\ast]^H$ determines $Y_{\pr}$ so that
$\chi_\C$
restricts to an isomorphism $X_{\pr}\to Y_{\pr}$.

Now,
by inspecting the action of $H$ on $W\oplus W^\ast$, it is easy to see that
$(W\oplus W^\ast)\smallsetminus (W\oplus W^\ast)_{\pr}$ has real
codimension at least $4$ in $W\oplus W^\ast$.  Therefore,
$(W\oplus W^\ast)_{\pr}$ is simply connected by Lemma \ref{lem:SympQuotSingular}.
Then $(W\oplus W^\ast)_{\pr} \to Y_{\pr}$ is a principal $H$-bundle and hence
the universal covering of $Y_{\pr}$ so that $\pi_1(Y_{\pr})\simeq H$.
As $Y_{\pr}$ and $X_{\pr}$ are homeomorphic, $\pi_1(X_{\pr})\simeq H$ as well.

Let
$G^0$ denote the connected component of the identity in $G$, let
$X_0 := N\git G^0$, let $N_{\pr}$ denote the
union of the principal orbits of $N$ as
a $G$-variety, and let $(X_0)_{\pr}:=N_{\pr}\git G^0$.  Then
$(X_0)_{\pr} \to X_{\pr}$ is a principal $\Gamma$-bundle and hence a covering space.
Therefore,
there is a covering map $\phi$ that makes the following
diagram commute:
\[
    \xymatrix{
        (X_0)_{\pr}
            \ar[d]_{/\Gamma}
            \ar@{<.}[r]^{\phi}
        &(W\oplus W^\ast)_{\pr}
            \ar[d]^{/H}
        \\
        X_{\pr}
            \ar[r]^{\chi_\C}
        &
        Y_{\pr}
    }
\]
However, recalling that $\Gamma \simeq H$, it follows that $\phi$ is an isomorphism.

By \cite[Theorem 2.2]{HerbigSchwarz}, $N$ is normal so that $X_0$ is normal by
\cite[Proposition 5.4.1]{BrunsHerzog}.
The codimension of $(X_0)\smallsetminus (X_0)_{\pr}$ is
at least two, and thus
$\phi$
extends to an isomorphism $W\oplus W^\ast \to X_0$.  Then clearly,
$X_0$ is smooth.  However, noting that the moment map of the $K$-action on $V$
coincides with that of the $K^0$-action, $X_0$ is the complex symplectic
quotient associated to the $K^0$-module $V$.  This contradicts Lemma
\ref{lem:SympQuotSingular} and completes the proof.
\end{proof}


\section{Symplectic quotients by $\Sp^1$}
\label{sec:Circle}

Throughout this section, we restrict to $K = \Sp^1$ so that $G = \C^\times$ and
we
assume that $n:= \dim_\C V \geq 3$.
Choose a basis for $V$ with respect to which the $\Sp^1$-action
is diagonal.
Then we may identify $V$ with $\C^n$ and describe the
action of $G$ with the \emph{weight vector} $A = (a_1, \ldots, a_n)$.  We assume with
no loss of generality that each weight $a_i$ is nonzero, for otherwise we may decompose
$V$ into a product with a trivial
module and restrict to the nontrivial factor.
We may assume $V$ is stable as a $G$-module, for if not, then all weights have the same
sign and $V\git G$ (and hence $M_0$) is a point.
Similarly, we assume that $\gcd(a_1, \ldots, a_n) = 1$, for otherwise we may replace
$K$ with $K/(\Z/\gcd(a_1,\ldots,a_n)\Z)$; see \cite{HerbigSeaton2}.
As $V$ is stable, this latter assumption amounts to assuming that $V$ has
TPIG as a $G$-module.
It follows by \cite[Theorem 3.2]{HerbigSchwarz} that the $G$-module $V$ is $1$-large.

Identifying the dual $\mathfrak{k}^\ast$ of the Lie algebra $\mathfrak{k}$ with $\R$,
the moment map $J$ is given by $J(z_1, \ldots, z_n) = \frac{1}{2}\sum_{i=1}^n a_i z_i \overline{z_i}$,
see \cite{HerbigIyengarPflaum}.
Note that if $A$ contains at least two positive and
two negative weights, then $M_0$ is not a rational homology manifold and cannot
be homeomorphic to a symplectic orbifold; see
\cite[Proposition 3.1]{HerbigIyengarPflaum} or \cite[Theorems 3 and 4]{FarHerSea}.
Otherwise, as $A$ has $n-1$ weights of the same sign, the null cone has complex
codimension $1$ in $V$, which is therefore not $2$-principal as a $G$-module, and Theorem
\ref{thrm:2Principal} does not apply.

Recall that $\mathcal{J}$ denotes the vanishing ideal of $M = J^{-1}(0)$ in $\R[V]$.
As not all weights have the same sign,
$\mathcal{J}$ is generated by $J$, see \cite{ArmsGotayJennings,HerbigIyengarPflaum};
this follows as well from \cite[Corollary 4.3]{HerbigSchwarz} and the fact
that the $G$-module $V$ is $1$-large.
Note that the moment map is $K$-invariant so that $\mathcal{J}^K$ is
as well generated by $J$ in this case.  It follows that the algebra $\R[M_0]$ of
real regular functions is given by $\R[V]^K/J\cdot \R[V]^K.$

Using complex coordinates $(\bs{z},\bs{w}):=(z_1, \ldots, z_n, w_1, \ldots, w_n)$
for $V_\C \simeq V\oplus V^\ast$ as in Section \ref{sec:Background}, the
weight vector for the $G$-module $V\oplus V^\ast$ is
$(a_1, \ldots, a_n, -a_1, \ldots, -a_n)$.  Similarly, the complexified moment map
$J_\C\co V\oplus V^\ast\longrightarrow \mathfrak{g}^\ast$ is given by
\begin{equation}
\label{eq:S1CMoment}
    (\bs{z},\bs{w}) \longmapsto \sum_{i=1}^n a_i z_i w_i
\end{equation}
Recall that $N := J_\C^{-1}(0) \subset V\oplus V^\ast$
and $\mathcal{J}_\C$ is the
vanishing ideal of $N$ in $\C[V\oplus V^\ast]$.
As $V$ is $1$-large, we have by \cite[Theorem 2.2]{HerbigSchwarz} that $N$
is irreducible.  Moreover, $\R[M_0] \otimes_\R\C$ is isomorphic to the
coordinate ring $\C[V\oplus V^\ast]^G/\mathcal{J}_\C^G$ of the complex
symplectic quotient $X:=N\git G$,
see Section \ref{sec:Background}.
Let $X^\prime := (V\oplus V^\ast) \git G$.

Let $(V\oplus V^\ast)_{\pr}$
and $X_{\pr}^\prime \subset X^\prime$ denote the
sets of principal
orbits.  Let $N_{\pr} := N \cap (V\oplus V^\ast)_{\pr}$, and set
$X_{\pr}:= N_{\pr}\git G$.
For an algebraic variety $Y$, let $Y_{\sm}$ denote the smooth points of $Y$.

\begin{lemma}
\label{lem:S1CIsotropyTypes}
Let $K = \Sp^1$ and assume that the unitary $K$-module $V$ has TPIG as a
$G = K_\C$-module and $n = \dim_\C V \geq 3$.
Then $N\smallsetminus N_{\pr}$ has complex codimension at least $2$ in $N$, and
$X\smallsetminus X_{\pr}$ has complex codimension at least $2$ in $X$.
Moreover, $X_{\sm}=X_{\pr}$.
\end{lemma}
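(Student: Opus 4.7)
The plan is to analyze the loci of nontrivial $G$-isotropy inside $V\oplus V^*$ explicitly, using a dimension count for the codimension statements and combining Luna's slice theorem with Chevalley--Shephard--Todd for the smoothness statement. For each integer $m>1$, set $I_m := \{i : m \mid a_i\}$ and $L_m := \{(\bs{z},\bs{w}) : z_i = w_i = 0 \text{ for all } i\notin I_m\}$, which is the fixed-point set of the subgroup $\Z/m\Z\subset G$ and has complex dimension $2|I_m|$. The hypothesis $\gcd(a_1,\ldots,a_n)=1$ forces $|I_m|<n$ for every $m>1$, and every point of $V\oplus V^*$ with nontrivial $G$-isotropy lies in $\{(0,0)\}\cup\bigcup_{m>1}L_m$.

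For the first two claims I would count dimensions directly. When $|I_m|\geq 1$, the restriction of $J_\C$ to $L_m$ is the nonzero polynomial $\sum_{i\in I_m}a_iz_iw_i$, so $N\cap L_m$ is a hypersurface in $L_m$ of complex dimension $2|I_m|-1\leq 2n-3$. Since $\dim_\C N=2n-1$, this gives $\codim_N(N\smallsetminus N_{\pr})\geq 2$. A closed non-principal $G$-orbit in $N$ is either $\{(0,0)\}$ or one-dimensional and contained in some $N\cap L_m$, so $X\smallsetminus X_{\pr}$ has complex dimension at most $2n-4$, while $\dim_\C X=2n-2$.

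The equality $X_{\sm}=X_{\pr}$ is the main point, and its nontrivial direction will be the main obstacle. The inclusion $X_{\pr}\subseteq X_{\sm}$ is immediate since $dJ_\C$ vanishes only at $(0,0)$, so $N$ is smooth along $N_{\pr}$ and $G$ acts freely there. For the converse, let $\overline{p}\in X_{\sm}$ with closed-orbit representative $p$. If $p=(0,0)$, then $\overline{p}$ is the distinguished point of the $\N$-graded algebra $\C[V\oplus V^*]^G/\mathcal{J}_\C^G$; by Lemma \ref{lem:SympQuotSingular} this algebra is not polynomial, so its localization at the irrelevant ideal cannot be regular, contradicting smoothness at $\overline{p}$. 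If $p\neq 0$ with stabilizer $L=\Z/m\Z$, $m>1$, then Luna's slice theorem identifies $X$ \'etale-locally near $\overline{p}$ with $W\git L$, where a direct computation shows the slice representation decomposes as a trivial $L$-summand of dimension $2|I_m|-2$ plus the $L$-module $\bigoplus_{i\notin I_m}(\chi_{a_i}\oplus\chi_{-a_i})$. The crucial observation is that the nontrivial weights occur in $\pm$-pairs, so any $g^k\in L$ acting nontrivially on a weight-$a_i$ line acts nontrivially also on the paired weight-$(-a_i)$ line; moreover $L$ acts faithfully on this nontrivial summand, since any common prime divisor of $m$ and $\gcd\{a_i:i\notin I_m\}$ would divide every $a_i$. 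Hence every nonidentity element of $L$ has at least two nontrivial eigenvalues on $W$, so $L$ contains no pseudo-reflections; Chevalley--Shephard--Todd then implies $\C[W]^L$ is not polynomial, giving $W\git L$ singular at the origin and contradicting smoothness of $\overline{p}$.
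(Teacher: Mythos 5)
Your treatment of $X\smallsetminus X_{\pr}$ and of the equality $X_{\sm}=X_{\pr}$ is correct and is essentially the paper's own argument: the paper likewise identifies the nontrivial part of the slice representation at a nonzero closed orbit as a module of the form $U\oplus U^\ast$, notes that nontrivial eigenvalues then occur in pairs $\lambda,\lambda^{-1}$ so that the (nontrivial) image of $L$ contains no pseudoreflections, and concludes via the converse of Chevalley's theorem, handling the cone point separately with Lemma \ref{lem:SympQuotSingular}. Your explicit computation of the slice module and your verification of faithfulness via the gcd condition are details the paper leaves implicit, and they check out.

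There is, however, a gap in your estimate of $\codim_N(N\smallsetminus N_{\pr})$. You cover $N\smallsetminus N_{\pr}$ by $\{(0,0)\}\cup\bigcup_{m>1}(N\cap L_m)$, i.e.\ by the locus of nontrivial isotropy. But $N_{\pr}=N\cap(V\oplus V^\ast)_{\pr}$, where $(V\oplus V^\ast)_{\pr}$ is the preimage of the principal stratum of $(V\oplus V^\ast)\git G$; consequently $N\smallsetminus N_{\pr}$ also contains the entire null cone of $V\oplus V^\ast$, which lies in $N$ as the fiber over the non-principal point $0$ of the GIT quotient, and whose generic point has \emph{trivial} isotropy and lies in no $L_m$. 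This piece cannot be ignored, since the later proof of Theorem \ref{thrm:Circle} uses that $N_{\pr}\to X_{\pr}$ is a $G$-fibration, which requires $N_{\pr}$ to consist of free closed orbits. The omission is easily repaired: the null cone is the union of two linear subspaces of dimension $n$ (the positive- and negative-weight coordinate subspaces of $V\oplus V^\ast$), hence has codimension $n-1\geq 2$ in the hypersurface $N$. This is precisely the extra case the paper's proof handles first.
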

 \begin{proof}
Since the weight vector for the $G$-action on $V\oplus V^\ast$ has $n$ positive and $n$ negative weights,
the null cone has complex codimension $n\geq 3$. Hence it intersects $N$ in codimension
at least
two.  Let $L$ be the isotropy group of a nonprincipal
nonzero closed orbit in $N$. Then its fixed point set is of the form $U\oplus U^*$ where $U$ is the span of $m$ of the $z_i$, $1\leq m\leq n-1$. If $m<n-1$, then
$V^L$ intersects $N$ in codimension at least three.
If $m=n-1$, then
as $J_\C$ restricted to $V^L$ is nontrivial,
$V^L\cap N$ has  codimension
at least two in $N$.
Hence $N\smallsetminus N_{\pr}$ has codimension
at least two in $N$.
Since the inverse image of  $X\smallsetminus X_\pr$ is $N\smallsetminus N_{\pr}$, we must have that $X\smallsetminus X_\pr$ has codimension
at least two in $X$.

Let $x\in N$ be on a nonzero closed orbit with isotropy group $L\neq\{e\}$. Then the
nontrivial part of the
slice
representation of $L$, see \cite{LunaSlice},
is of the form
$L\to\GL(U\oplus U^*)$
where $L$ is finite and acts nontrivially on $U$. Thus the image of $L$ in
$\GL(U\oplus U^*)$ is not generated by pseudoreflections and by the converse to Chevalley's theorem, $X$ is not smooth at the image of $x$.
Hence, except perhaps for the image $x_0$ of $0\in N$, we have that $X_{\pr}=X_{\sm}$.
But $x_0$ is a singular point of $X$ by Lemma \ref{lem:SympQuotSingular}
so that $X_{\pr}=X_{\sm}$.
\end{proof}

With this, we proceed with the following.

\begin{proof}[Proof of Theorem \ref{thrm:Circle}]
Assume
that there is a linear symplectic orbifold $W/H$ and a regular
diffeomorphism $\chi\co M_0 \to W/H$ so that $\chi^\ast\co\R[W]^H\to \R[V]^K/\mathcal{J}^K$
is an isomorphism.  Tensoring with $\C$ yields an isomorphism
$\chi_\C^\ast\co\C[W\oplus W^\ast]^H\to \C[V\oplus V^\ast]^G/\mathcal{J}_\C^G$ and hence
a corresponding isomorphism $\chi_\C\co X \to Y:=(W\oplus W^\ast)/H$. As above, one has that
$Y_{\sm}=Y_{\pr}$,
see \cite[Theorem 9.12]{GWSlifting}.
Hence $\chi_\C$ induces an isomorphism of $X_{\pr}$ and $Y_{\pr}$.
Since the complement of $(W\oplus W^*)_{\pr}$ has complex codimension two in $(W\oplus W^\ast)$,
$(W\oplus W^\ast)_{\pr}$ is simply connected and $\pi_1(Y_\pr)\simeq H$ is finite. Similarly, $\pi_2(Y_{\pr})=0$.

The quotient mapping $N_{\pr}\to X_{\pr}$ is a $G$-fibration
and we have the exact sequence
\[
    \underbrace{\pi_2(G)}_{=0}
    \to     \pi_2(N_{\pr})
    \to     \underbrace{\pi_2(X_{\pr})}_{=0}
    \to     \underbrace{\pi_1(G)}_{=\Z}
    \to     \pi_1(N_{\pr})
    \to     \underbrace{\pi_1(X_{\pr})}_{=H}
\]
from which we conclude that $\pi_2(N_{\pr}) = 0$ and that
$\pi_1(N_{\pr})$ is infinite.

From Equation \eqref{eq:S1CMoment}, one easily checks that the origin is the only singular
point of $N$, and moreover that the set of regular points $N^\ast:= N\smallsetminus \{0\}$
is invariant under the standard action of $G$ on $V\oplus V^\ast$ by scalar
multiplication.  As $N^\ast$ is smooth and $N^\ast \smallsetminus N_{\pr}$
has codimension $2$ in $N^\ast$, we have by Lemma \ref{lem:Codim2Homotopy} that
$\pi_2(N^\ast) = \pi_2(N_{\pr}) = 0$ and
$\pi_1(N^\ast) = \pi_1(N_{\pr})$ is infinite.
We have a fibration $G \to N^\ast \to Q$ where $Q$ is a smooth hypersurface
of $\C\mathbb{P}^{2n-1}$.  Recalling that $n\geq 3$, we have by the Lefschetz hyperplane
section theorem that $\pi_2(Q) = \pi_2(\C\mathbb{P}^{2n-1}) = \Z$ and
$\pi_1(Q) = \pi_1(\C\mathbb{P}^{2n-1}) = 0$.  From the homotopy exact sequence for fibrations,
we have
\[
    \underbrace{\pi_2(N^\ast)}_{=0}
    \to     \underbrace{\pi_2(Q)}_{=\Z}
    \stackrel{k}{\to}
            \underbrace{\pi_1(G)}_{=\Z}
    \to     \pi_1(N^\ast)
    \to     \underbrace{\pi_1(Q)}_{=0}
\]
where $k$ is a nonnegative integer.  But then $k > 0$ so that $\pi_1(N^\ast)$
is finite, yielding a contradiction and completing the proof.
\end{proof}


\section{Symplectic quotients by $\SU_2$}
\label{sec:SU2}

In this section, we consider $K = \SU_2$ and hence $G = \SL_2(\C)$.
Let $R_d$ denote the unitary
$K$-module
of binary forms
of degree $d$.  It is well known that every irreducible
complex $K$-module
  is isomorphic
to $R_d$ for a positive integer $d$; see
\cite[Section 15.6]{WignerBook} or \cite[Section 4.2]{BerndtBook}. The same classification holds for irreducible $G$-modules.

We consider nontrivial $G$-modules $V$ with $V^G=\{0\}$. Then, up to isomorphism, the non $2$-principal cases are given by the following list.
\begin{equation}
\label{eq:SU2candidates}
    kR_1 \;\mbox{for}\; 1 \leq k \leq 2, \quad
    R_2, \quad
    2R_2, \quad
    R_2\oplus R_1,\quad
    R_3,\quad\mbox{and}\quad
    R_4.
\end{equation}
In particular, \cite[Theorem 11.9]{GWSlifting} demonstrates that
every $G$-module not on this list is
$2$-large except for $3R_1$.
Recall that a
module which is $2$-large must be $2$-principal and have FPIG.
Though $3R_1$ is merely $1$-large, one easily checks that it is still $2$-principal.
Hence, by Theorem \ref{thrm:2Principal}, only the symplectic quotients associated to
the modules on the list may admit a symplectomorphism to a linear symplectic orbifold.

For a unitary $G$-module $V$, we let $\mathcal{J}$ denote the vanishing ideal of the moment map
$J \co V\to\mathfrak{k}^\ast$ in $\R[V]$ and $\mathcal{J}^K = \mathcal{J}\cap \R[V]^K$ as above.
We note further that by \cite[Theorems 2.2 and 3.4]{HerbigSchwarz}, every
nontrivial
representation
of $G$ is
$1$-large,
implying that the components of the moment map generate $\mathcal{J}$, except
for $R_1$, $2R_1$, and $R_2$.

To prove Theorem \ref{thrm:SU2}, we will determine which representations
listed in Equation \eqref{eq:SU2candidates} admit $\Z$-graded regular symplectomorphisms
to linear symplectic orbifolds.  We first consider the cases for which such a
symplectomorphism does not exist.

First recall the following.  Let $W$ be a real representation of the finite group $H$,
and suppose $W$ decomposes into irreducibles as $W = \bigoplus_{j=1}^s \mu_j W_j$
where the $W_j$ are
pairwise nonisomorphic. Then the quadratic invariants in
$\R[W]^H$ are naturally in bijection with the symmetric $2$-tensors in the irreducible
representations.  In particular, there are $\sum_{j=1}^s {\mu_j+1\choose 2}$
linearly independent quadratic
invariants.

\begin{proposition}
\label{prop:SU2NonOrbifolds}
Let $K = \SU_2$ and $G = \SL_2$, and let $V$ be a unitary $K$-module that is isomorphic to
$2R_2$ or $R_2\oplus R_1$.  Then the symplectic quotient $M_0$ associated to $V$
does not admit a $\Z$-graded regular symplectomorphism to a linear symplectic orbifold.
\end{proposition}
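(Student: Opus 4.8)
The plan is to use a graded dimension/invariant-counting argument to obstruct the existence of a $\Z$-graded regular symplectomorphism. The key point is that such a symplectomorphism $\chi \co M_0 \to W/H$ induces a graded isomorphism $\chi^\ast \co \R[W]^H \to \R[V]^K/\mathcal{J}^K$ preserving the $\Z$-grading. In particular, the graded pieces in each degree must have matching dimensions, and since both algebras are quotients carrying a Poisson bracket of degree $-2$, the Poisson structure in low degrees must also correspond. First I would compute, for each of the two modules $V = 2R_2$ and $V = R_2 \oplus R_1$, the Hilbert series (or at least the low-degree graded dimensions) of $\R[M_0] = \R[V]^K/\mathcal{J}^K$. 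The generators of $\R[V]^K$ and the degree-$2$ component function(s) of the moment map $J$ generating $\mathcal{J}^K$ (which exists for $2R_2$ since it is $1$-large; for $R_2 \oplus R_1$ one must check the ideal generators directly, as $R_2$ is among the exceptional non-$1$-large cases) can be worked out from classical $\SL_2$ invariant theory of binary forms.

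The crux of the argument is the rigidity of the degree-$2$ part. On the orbifold side, the quadratic invariants $\R[W]^H_2$ are, by the fact recalled just before the statement, in bijection with symmetric $2$-tensors of the irreducible constituents of $W$, and crucially the Poisson bracket restricted to the quadratic invariants of a linear symplectic orbifold $W/H$ endows $\R[W]^H_2$ with the structure of a reductive Lie algebra — indeed the quadratic invariants of the Weyl algebra close under bracket into a copy of the symplectic Lie algebra commutant, and for $W/H$ one gets a specific semisimple-plus-center Lie algebra determined by the isotypic decomposition of $W$. I would therefore compute the Lie algebra structure of the degree-$2$ regular functions $(\R[M_0])_2$ under $\{\,,\,\}$ for each $V$, and show it is \emph{not isomorphic} to the degree-$2$ Lie algebra of any linear symplectic orbifold of the matching total dimension. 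Concretely, the dimension of $(\R[M_0])_2$ equals the number of quadratic $K$-invariants of $V$ minus the number of independent relations imposed by $\mathcal{J}^K$ in degree $2$; matching this against $\sum_j \binom{\mu_j+1}{2}$ for an orbifold constrains the isotypic type of $W$ severely, and then the bracket relations rule out the remaining possibilities.

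The main obstacle I anticipate is twofold. For $R_2 \oplus R_1$, the module $R_2$ is one of the three listed exceptions ($R_1$, $2R_1$, $R_2$) for which the moment map components do \emph{not} generate $\mathcal{J}$, so I cannot simply read off $\mathcal{J}^K$ from $J$; I would need to determine the extra generators of $\mathcal{J}$ (or $\mathcal{J}^K$) explicitly, presumably via the known description of the real ideal of $J^{-1}(0)$ from \cite{HerbigSchwarz} or by direct computation, before the Hilbert-series comparison can proceed. The second obstacle is the Lie-algebraic classification: after fixing the dimension of the quadratic part, I must verify that the bracket structure on $(\R[M_0])_2$ (which I expect to be a proper, possibly solvable or non-reductive, subquotient reflecting the singular origin) cannot be realized by any $\R[W]^H_2$. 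I expect the cleanest obstruction to be that the Poisson algebra $(\R[M_0])_2$ fails to act with the correct rank or fails a semisimplicity property that every linear symplectic orbifold's quadratic Poisson algebra must satisfy; isolating and proving the precise invariant that distinguishes them is the delicate step, and I would organize the proof around computing that invariant explicitly for $2R_2$ and $R_2 \oplus R_1$ and showing no orbifold matches.
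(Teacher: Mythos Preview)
Your opening move---counting the dimension of the degree-$2$ part of $\R[M_0]$ and comparing it with $\sum_j\binom{\mu_j+1}{2}$ on the orbifold side---is exactly what the paper does, and it is the right idea. But you then overshoot: the Lie-algebra analysis of $(\R[M_0])_2$ under the Poisson bracket is not needed, and the paper finishes both cases with much less.

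For $V=2R_2$, the paper observes that $-1\in\SU_2$ acts trivially, so $V$ is a real $\SO_3$-module isomorphic to $4\R^3$; the first fundamental theorem for $\SO_3$ gives exactly ten linearly independent quadratic invariants (the pairwise scalar products). Since the moment map components are \emph{not} $K$-invariant (they transform in the adjoint representation), none of these die in $\R[M_0]$. On the orbifold side $W$ has real dimension $6$, and the only way to get $\sum_j\binom{\mu_j+1}{2}=10$ is the real decomposition $3W_1\oplus 2W_2\oplus W_3$ with each $W_j$ one-dimensional. This is visibly not the underlying real module of a unitary (complex) $H$-module, and that is already a contradiction. No bracket computation is required.

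For $V=R_2\oplus R_1$, first a correction: this module \emph{is} $1$-large (the exceptional list is $R_1$, $2R_1$, $R_2$, not $R_2\oplus R_1$), so your worry about determining $\mathcal{J}^K$ is unfounded; the moment map components generate $\mathcal{J}$. There are four linearly independent quadratic invariants, again surviving to $\R[M_0]$. With $\dim_\R W=4$ the count $\sum_j\binom{\mu_j+1}{2}=4$ leaves two real decompositions: four distinct one-dimensional pieces, or $2W_1\oplus W_2$ with $\dim_\R W_1=1$, $\dim_\R W_2=2$. The first is not unitary. For the second, the paper uses a stratification argument rather than Poisson brackets: such a $W/H$ has \emph{two} codimension-two orbit-type strata (the images of $2W_1$ and of $W_2$), whereas $M_0$ has only one (coming from the fixed points of the maximal torus in $\SU_2$). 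Since a symplectomorphism preserves orbit-type strata, this is a contradiction.

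So the gap in your plan is not a logical error but a strategic one: the Lie-algebraic obstruction you propose may well work, but it is substantially harder to pin down than the two elementary finishing arguments the paper uses (incompatibility with a complex structure, and a strata count). If you pursue your route, you would in effect be reproving that the quadratic Poisson algebra of $W/H$ is $\mathfrak{sp}(W)^H$ and then classifying which such commutants are compatible with the given dimension---doable, but unnecessary here.
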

\begin{proof}
$\bs{2R_2}$:  Let $V = 2R_2$.  It is easy to see that the scalar $-1\in\SU_2$ acts trivially
on $V$.  Noting that $R_2$ is isomorphic as an $\SL_2$-module to the adjoint representation
of $\SL_2$, it then follows that $V$ is isomorphic as a real
$\SU_2/\{\pm 1\} \simeq \operatorname{SO}_3$-module
to $4\R^3$ where each $\R^3$ carries the standard representation of $\operatorname{SO}_3$.
By the first fundamental theorem of invariant theory for $\operatorname{SO}_3$,
see \cite[Section 9.3]{PopovVinberg}, $\R[4\R^3]^{\operatorname{SO}_3}$ contains ten
linearly independent
quadratic invariants given by the invariant scalar products.
As the moment map is equivariant so that the components of the moment map are not
invariant, the quadratic invariants
remain
linearly independent in the ring $\R[M_0]$ of regular functions.

Assume
that there is a finite group $H$, a (complex) $3$-dimensional unitary
$H$-module $W$, and a $\Z$-graded regular
symplectomorphism from $M_0$ to $W/H$. Then $\R[W]^H$
must as well contain ten
linearly independent quadratic invariants.  Considering the possible decompositions of $W$
into irreducible real representations, it must be that $W \simeq 3W_1 \oplus 2W_2 \oplus W_3$
where the $W_j$ are pairwise
nonisomorphic and $1$-dimensional.  But this is not the decomposition
of a unitary $W$, yielding a contradiction.

$\bs{R_2\oplus R_1}$:   Let $V = R_2\oplus R_1$.
In this case, one computes that there are four
linearly  independent
quadratic generators of $\R[V]^K$, which as above remain
linearly independent in $\R[M_0]$.
Assume
that there is a finite group $H$, a (complex) $2$-dimensional unitary
$H$-module $W$, and a $\Z$-graded regular diffeomorphism from $M_0$ to $W/H$.
Then $\R[W]^H$ contains
four linearly independent quadratic generators.
As a real representation,
we either have that $W$ decomposes into four distinct $1$-dimensional representations or
$W \simeq 2W_1\oplus W_2$ with $W_1$ of dimension $1$ and $W_2$ of dimension $2$.
The former case is not the real decompositions of a unitary $W$,
so consider the latter case.
Then $H$ acts on $W_1$ as multiplication by $-1$ and on $W_2$ as a cyclic group of rotations.
Therefore, $W/H$
has two real codimension two strata given by the images of $(2W_1)/H$
and $W_2/H$.  On the other hand, $M_0$ has only one codimension
two stratum
corresponding to the fixed points of $\Sp^1  < \SU_2$.
As a symplectomorphism preserves orbit type strata
by \cite[Proposition 3.3]{SjamaarLerman} as above, we have reached a contradiction.
\end{proof}

\begin{remark}
\label{rem:SU2LinOrb}
Note further that the arguments given in Proposition \ref{prop:SU2NonOrbifolds} along with
observations about the stratifications can be used to demonstrate that none of the symplectic
quotients considered admit orbit type stratum-preserving
$\Z$-graded regular diffeomorphisms to linear
orbifolds (which need not be symplectic or arise from unitary representations).
\end{remark}

We now turn to the representations listed in Equation \eqref{eq:SU2candidates} that
do admit a $\Z$-graded regular symplectomorphism to an orbifold.

\begin{proposition}
\label{prop:SU2R4Orbifold}
The symplectic quotient associated to $R_4$ is $\Z$-graded regularly symplectomorphic to
$\C^2/H$ where $H\simeq S_3$, the symmetric group on $3$-letters, with the standard diagonal action on $\C^2\simeq \R^2\oplus\R^2$.
\end{proposition}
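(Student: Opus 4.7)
The plan is to exploit the fact that $-I \in \SU_2$ acts trivially on $R_4$ (since $R_4$ consists of degree-$4$ binary forms), so the action factors through $\SU_2/\{\pm I\} \cong \SO_3$. Under the standard isomorphism, the $\SO_3$-module $R_4$ becomes the complexification of the real $5$-dimensional irreducible representation of $\SO_3$ on the space $V_\R$ of traceless symmetric real $3\times 3$ matrices by conjugation. I would write a typical element of $V = V_\R \oplus iV_\R$ as $v = A + iB$ with $A, B \in V_\R$, use the trace pairing as the invariant inner product, and show by direct computation that $J(A + iB) = c[A, B]$ for some nonzero constant $c$. In particular, $M = J^{-1}(0)$ is the commuting variety of pairs of real traceless symmetric $3\times 3$ matrices.

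Next, by the spectral theorem every commuting pair in $M$ is $\SO_3$-conjugate to a pair of diagonal matrices, unique up to the residual action of $S_3 = N_{\SO_3}(\mathfrak{t})/Z_{\SO_3}(\mathfrak{t})$ on the $2$-dimensional Cartan subspace $\mathfrak{t} \subset V_\R$ of diagonal traceless matrices, with $S_3$ acting by simultaneous permutation of diagonal entries. Setting $\mathfrak{t}_\C := \mathfrak{t} \oplus i\mathfrak{t}$, this yields $M = \SO_3 \cdot \mathfrak{t}_\C$ and an induced homeomorphism $\chi\co M_0 \to \mathfrak{t}_\C/S_3$. Choosing an orthonormal basis for $\mathfrak{t}$ relative to the trace pairing identifies $\mathfrak{t}_\C \cong \R^2 \oplus i\R^2 \cong \C^2$, with the restriction of the symplectic form on $V$ becoming the standard symplectic form on $\C^2$ and with the residual $S_3$-action becoming the standard $2$-dimensional irreducible representation acting diagonally on the two real summands.

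It remains to verify that $\chi$ is a $\Z$-graded regular symplectomorphism. Since $R_4$ is $1$-large, the components of $J$ generate $\mathcal{J}$, and the restriction map $\R[V]^{\SO_3} \to \R[\mathfrak{t}_\C]^{S_3}$ descends to an isomorphism $\R[M_0] \to \R[\mathfrak{t}_\C]^{S_3}$: surjectivity follows from the fact that $\R[V]^{\SO_3}$ is generated by trace polynomials $\operatorname{tr}(A^m B^k)$ restricting to the power sums $\sum_i a_i^m b_i^k$ that generate $\R[\mathfrak{t}_\C]^{S_3}$, while injectivity follows from $M = \SO_3 \cdot \mathfrak{t}_\C$ (so $\SO_3$-invariants vanishing on $\mathfrak{t}_\C$ vanish on all of $M$). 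The $\Z$-grading is preserved because restriction of polynomials preserves degree. The main obstacle is verifying Poisson-preservation: here I would use that at any generic $v \in \mathfrak{t}_\C$, one has the decomposition $T_v M = \mathfrak{t}_\C \oplus (\mathfrak{so}_3 \cdot v)$, with $\mathfrak{so}_3\cdot v$ being $\omega$-isotropic because $J(v) = 0$ (via the moment map identity $\omega(X_V, Y_V) = J(\cdot)([X, Y])$) and with $\mathfrak{t}_\C$ being symplectically orthogonal to $\mathfrak{t}_\C^\omega \supset \mathfrak{so}_3 \cdot v$; for $\SO_3$-invariant $f, g$ on $V$, the Hamiltonian vector fields satisfy $X_f(v), X_g(v) \in T_v M$, and this orthogonal decomposition gives $\{f, g\}(v) = \{f|_{\mathfrak{t}_\C}, g|_{\mathfrak{t}_\C}\}_{\mathfrak{t}_\C}(v)$, identifying the Poisson brackets on generators.
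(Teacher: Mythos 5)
Your argument is correct and lands on exactly the same slice as the paper's proof --- your Cartan subspace $\mathfrak{t}_\C$ of diagonal traceless matrices is precisely the fixed-point set $(R_4)^L$ of the principal isotropy group $L=\{\diag(\pm1,\pm1,\pm1)\}\cap\SO_3(\R)\simeq\Z/2\Z\oplus\Z/2\Z$, and your $N_{\SO_3}(\mathfrak{t})/Z_{\SO_3}(\mathfrak{t})\simeq S_3$ is the paper's $N_{\SO_3(\R)}(L)/L$ --- but you get there by a genuinely different route. The paper never computes the moment map: it invokes the Luna--Richardson theorem to obtain $\C[(R_4)^L]^{S_3}\simeq\C[R_4]^{\SL_2}$ and $\R[U^L]^{S_3}\simeq\R[U]^K$ in one stroke, deduces $M^L=(R_4)^L$ and the homeomorphism of quotients from this together with Kempf--Ness, gets surjectivity of the restriction of invariants from Weyl's polarization theorem, and handles the Poisson structure by noting that $df(v)$ is $L$-fixed for invariant $f$ and $v\in(R_4)^L$, so Hamiltonian vector fields are tangent to $(R_4)^L$ and brackets restrict on the nose. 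You instead compute $J(A+iB)=c[A,B]$, identify $M$ with the commuting variety and get $M=\SO_3\cdot\mathfrak{t}_\C$ from the spectral theorem, obtain surjectivity from the trace polynomials $\operatorname{tr}(A^mB^k)$ restricting to the power sums, and treat the bracket via the splitting $T_vM=\mathfrak{t}_\C\oplus(\mathfrak{so}_3\cdot v)$ with the orbit directions isotropic and $\omega$-orthogonal to $\mathfrak{t}_\C$; all of these steps check out. Your version is more explicit and avoids Luna--Richardson entirely, at the cost of the moment-map computation and two small points you should still pin down: that distinct $S_3$-orbits in $\mathfrak{t}_\C$ lie in distinct $\SO_3$-orbits even for degenerate commuting pairs (a pencil argument with $A+tB$ does it), and that the resulting continuous bijection $\mathfrak{t}_\C/S_3\to M_0$ is a homeomorphism (compare the two Hilbert embeddings, which correspond under the isomorphism of invariant rings). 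Also note that for surjectivity you do not need the full first fundamental theorem for $\R[V]^{\SO_3}$; it suffices, as in the paper, that the power sums $\sum_i a_i^mb_i^k$ generate $\R[\mathfrak{t}_\C]^{S_3}$ and visibly lift. What the paper's more structural approach buys is reusability: the same words dispose of $R_2$ in Remark \ref{rem:R_2}, whereas your computation is specific to $R_4$.
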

\begin{proof}
  The complex $K$-module $R_4$ is $U\otimes_\R\C$ where $U$ is a real $\SO_3(\R)\simeq K/(\pm I)$-module. One can identify $U$ with the set of real  trace 0 symmetric $3\times3$ matrices with the conjugation action of $\SO_3(\R)$.  It is well-known that the principal isotropy group $L$ of $U$ in $\SO_3(\R)$ consists of the diagonal elements with entries $\pm 1$, hence is isomorphic to $\Z/2\Z\oplus \Z/2\Z$. The fixed point set of $L$ is 2-dimensional and $H:=N_{\SO_3(\R)}(L)/L\simeq S_3$ with its standard action on $\R^2\simeq U^L$. Similarly, considering the $\SO_3(\C)$-action on $R_4$, we have the same principal isotropy group and corresponding action of $H$ on $(R_4)^L\simeq\C^2$.  By the Luna-Richardson theorem \cite{LunaRichardson} we have isomorphisms $\C[(R_4)^L]^{H}\simeq\C[R_4]^G$ and $\R[U^L]^{H}\simeq\R[U]^K$.

As usual, let $M$ denote the zeroes of the moment mapping on $R_4$.  Then we have a homeomorphism of $M/K$ with $R_4\git G$. It then follows from the Luna-Richardson theorem  that the inclusion  $M^L\to (R_4)^L$ induces a homeomorphism of $M^L/H$ with $(R_4)^L/H$. This implies that  $M^L=(R_4)^L$ and  that $\R[M]^K$ restricts injectively to $\R[(R_4)^L]^{H}$. Now by Weyl \cite[Ch.\ 2 \S 3]{Weyl}, the $H=S_3$ invariants on $(R_4)^L\simeq \R^2\oplus\R^2$ are generated by the polarizations of the invariants on a single copy of $\R^2$. Such invariants all lift to $\R[U\oplus U]^K$ since $\R[U]^K\simeq\R[\R^2]^{H}$. Thus $M_0=M/K$ is regularly $\Z$-graded diffeomorphic to $(R_4)^L/H$.

We need to show that $M_0$ and
$(R_4)^L/H$ are symplectomorphic. But this is automatic. The $K$-invariant symplectic form on $R_4$ restricts to an $H$-invariant symplectic form on $(R_4)^L$
by \cite[Lemma 27.1]{GuilSternSTPhysics}.
If $f$ and $f'$ are $K$-invariant smooth functions on $R_4$, then on
$(R_4)^L$ the differentials of the functions have to be
$L$-fixed. Hence the Poisson bracket of $f$ and $f'$ on $R_4$ restricts to the Poisson bracket of the restrictions of $f$ and $f'$ to
$(R_4)^L$.
\end{proof}
\begin{remark}\label{rem:R_2}
Suppose that $V=R_2$. Then a similar (and easier) argument shows that $M_0$ is $\Z$-graded regularly symplectomorphic to $\C/(\pm 1)$; see also \cite{GotayBos}.
\end{remark}

One cannot use the Luna-Richardson theorem to get the desired result in the case of $R_3$ since the normalizer of a principal isotropy group
is not finite.  Hence, we instead use the following.

Recall \cite{DadokKac}, see also \cite{Dadok} or \cite[Section 8.6]{PopovVinberg},
that if $G$ is a connected reductive group, then the complex $G$-module $V$ is \emph{polar}
if there is a \emph{Cartan subspace} of $V$, a
linear subspace $\mathfrak{c}$ of $V$ such that
each element of $\mathfrak{c}$ has closed $G$-orbit, $\dim_\C\mathfrak{c} = \dim_\C V\git G$,
and the tangent spaces to the orbits coincide on a Zariski open subset of $\mathfrak{c}$
(identifying each tangent space with a subspace of $V$). Let $N_G(\mathfrak{c})$ denote the subgroup of $G$ that fixes the set $\mathfrak{c}$
and let  $Z_G(\mathfrak{c})$ denote the subgroup of $G$ that fixes each point in
$\mathfrak{c}$.  By \cite[Lemma 2.7 and Theorem 2.9]{DadokKac}, the group
$\Gamma := N_G(\mathfrak{c})/Z_G(\mathfrak{c})$ is finite and the restriction
to $\mathfrak{c}$ induces an isomorphism between the complex
GIT quotients $V\git G$ and $\mathfrak{c}/\Gamma$.
If $\dim V\git G=1$, then $V$ is automatically polar.

\begin{proposition}
\label{prop:SU2R3Orbifold}
The symplectic quotient associated to $R_3$ is $\Z$-graded regularly symplectomorphic
to the orbifold $\C/\Z_4 = \C/\langle i\rangle$.
\end{proposition}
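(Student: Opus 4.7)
The plan is to adapt the strategy of Proposition \ref{prop:SU2R4Orbifold}, using the Dadok-Kac theory of polar representations in place of Luna-Richardson. Since $\C[R_3]^G = \C[\Delta]$ with $\Delta$ the discriminant of a binary cubic (of degree $4$), we have $\dim_\C R_3\git G = 1$, so $R_3$ is automatically polar. I would take $f_0 := x^3+y^3$, a binary cubic with distinct roots (hence a stable point), and let $\mathfrak{c} := \C f_0$ be the resulting Cartan subspace.

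The first task is to compute $\Gamma := N_G(\mathfrak{c})/Z_G(\mathfrak{c})$ and its action on $\mathfrak{c}\simeq\C$. A direct check shows that $g_0 := \bigl(\begin{smallmatrix} 0 & i\\ i & 0\end{smallmatrix}\bigr)$ lies in $\SU_2$ and satisfies $g_0\cdot f_0 = if_0$, so the image of $\Gamma$ in $\C^\times$ contains $\langle i\rangle\simeq\Z/4$. By Dadok-Kac, $\C[\mathfrak{c}]^\Gamma$ is the image of $\C[R_3]^G = \C[\Delta]$ under restriction to $\mathfrak{c}$, and since $\Delta|_\mathfrak{c}$ is a nonzero scalar multiple of $t^4$, this forces $\Gamma = \langle i\rangle$ acting on $\mathfrak{c}\simeq\C$ by multiplication by $i$.

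Next I would show $J(f_0) = 0$. This can be verified directly in an orthonormal basis of $R_3$, using that the diagonal torus and $g_0$ together fix $\mathfrak{c}$ to force each of the three components of $J$ to vanish at $f_0$; alternatively, $f_0$ has a closed $G$-orbit and is stabilized up to a unitary scalar by $g_0\in K$, so it minimizes the norm on its orbit. Since $J_\xi$ is Hermitian, $J_\xi(tf_0) = |t|^2 J_\xi(f_0) = 0$ for all $t\in\C$, hence $\mathfrak{c}\subset M$, and the inclusion descends to a continuous map $\phi\co\mathfrak{c}/\Gamma\to M_0$. Composing the Kempf-Ness homeomorphism $M_0 \simeq R_3\git G$ with the Dadok-Kac homeomorphism $R_3\git G\simeq\mathfrak{c}/\Gamma$ shows $\phi$ is a homeomorphism; equivalently, $K\cdot\mathfrak{c} = M$, because every point of $M$ lies on a closed $G$-orbit (Kempf-Ness) and every closed orbit meets $\mathfrak{c}$ (polarity).

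To promote $\phi$ to a $\Z$-graded regular symplectomorphism, I would follow the template of Proposition \ref{prop:SU2R4Orbifold}. The restriction $r\co\R[V]^K\to\R[\mathfrak{c}]^\Gamma$ is $\Z$-graded, and its kernel equals $\mathcal{J}^K$ since $K\cdot\mathfrak{c} = M$ (any $K$-invariant function vanishing on $\mathfrak{c}$ vanishes on $K\cdot\mathfrak{c} = M$, and the converse is immediate). Surjectivity is obtained by restricting the generators $\|f\|^2,\Real\Delta,\Imaginary\Delta\in\R[V]^K$ to $\|f_0\|^2 t\bar t$, $-27\Real(t^4)$, $-27\Imaginary(t^4)$, which generate $\R[\mathfrak{c}]^\Gamma$. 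Thus $r$ descends to a $\Z$-graded algebra isomorphism $\R[M_0]\to\R[\mathfrak{c}]^\Gamma$. The Hermitian symplectic form on $R_3$ restricts to a nonzero $\Gamma$-invariant real multiple of the standard symplectic form on $\mathfrak{c}\simeq\C$, and Poisson brackets of $K$-invariant functions restrict to the induced bracket on $\mathfrak{c}$ since the differentials of $K$-invariants at points of $\mathfrak{c}$ are $Z_K(\mathfrak{c})$-fixed (cf.\ \cite[Lemma 27.1]{GuilSternSTPhysics}, exactly as in Proposition \ref{prop:SU2R4Orbifold}). The main technical point will be this last Poisson-compatibility step: unlike the Luna-Richardson setting, the generators of $\R[\mathfrak{c}]^\Gamma$ are produced by polar restriction rather than as $L$-invariants lifted through a fixed-point subspace, so one must verify the slice-style argument still applies.
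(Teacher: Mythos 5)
Your proposal follows the paper's own route almost exactly: the same Cartan subspace $\mathfrak{c}=\C\cdot(x^3+y^3)=\C\cdot(v_3+v_{-3})$, the same identification $\Gamma\simeq\Z/4\Z$, the inclusion $\mathfrak{c}\subset M$, the identity $M=K\cdot\mathfrak{c}$, and the same surjectivity/kernel computation showing that restriction induces a $\Z$-graded isomorphism $\R[M_0]\to\R[\mathfrak{c}]^\Gamma$. Up to and including the regular diffeomorphism everything is correct (one small slip: the diagonal torus of $\SU_2$ does not preserve $\mathfrak{c}$ as a set, only a finite subgroup does; but the direct verification of $J(v_3+v_{-3})=0$ goes through because $\mathfrak{g}\cdot(v_3+v_{-3})$ is Hermitian-orthogonal to $v_3+v_{-3}$, which is also the key computation the paper records).

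The genuine gap is precisely the step you flag at the end, and the mechanism you propose for it does not suffice. For $R_4$ the Poisson compatibility was free because $(R_4)^L$ is the \emph{full} fixed-point set of $L$, so $L$-fixedness of the differentials forces Hamiltonian vector fields to be tangent to $(R_4)^L$. Here $Z_K(\mathfrak{c})\simeq\Z/3\Z$ and $V^{Z_K(\mathfrak{c})}=\Span_\C(v_3,v_{-3})$ strictly contains $\mathfrak{c}$, so $Z_K(\mathfrak{c})$-fixedness of $df(v)$ only confines $X_f(v)$ to a four-real-dimensional subspace and does not by itself show that the bracket restricts. The paper closes this with a different argument: since $\mathfrak{g}\cdot v\perp v$, at a nonzero $v\in\mathfrak{c}$ one has the orthogonal decomposition $T_vM=\mathfrak{c}\oplus\mathfrak{k}\cdot v$; $K$-invariance gives $df(v)|_{\mathfrak{k}\cdot v}=0$, hence $X_f(v)\in(\mathfrak{k}\cdot v)^{\omega}=T_vM$, so $X_f(v)=c+\xi v$ with $c\in\mathfrak{c}$, $\xi\in\mathfrak{k}$; and since $\omega(\eta v,u)=dJ_\eta(v)(u)=0$ for all $u\in T_vM$, the $\mathfrak{k}\cdot v$-components drop out of $\omega(X_f(v),X_{f'}(v))$ and one is left with $\{f|_{\mathfrak{c}},f'|_{\mathfrak{c}}\}(v)$. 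You need to supply this (or an equivalent) argument; without it you have only established a $\Z$-graded regular diffeomorphism, not a symplectomorphism.
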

\begin{proof}
Let $V = R_3$ and $G=\SL_2$.  Then it is classical that $\C[V]^G=\C[f]$ where $f$ is homogeneous of degree 4. Thus
$V$ is polar, $\mathfrak{c}$ has dimension one and $\Gamma\simeq \Z/4\Z$. Using the usual weight basis $v_3$, $v_1$, $v_{-1}$ and $v_{-3}$ on $V$ we may choose
$\mathfrak{c}$ to be the span of $v:=v_3+v_{-3}$.  Then one calculates directly that $\mathfrak g(v)$ is perpendicular to $v$ relative to the hermitian form on $V$. Hence $v\in M$ and $\mathfrak{c}=\C\cdot v\subset M$. (One could just choose $v$ to be any nonzero point of $M$.)\
Since every closed $G$-orbit intersects $\mathfrak{c}$, we must have that $M=K\cdot \mathfrak{c}$.
Hence $M_0=M/K\simeq V\git G\simeq \mathfrak{c}/\Gamma$ and $\R[M]^K$ injects into $\R[\mathfrak{c}]^\Gamma$. But $\R[\mathfrak{c}]^\Gamma\otimes_\R\C\simeq\C[\mathfrak{c}\oplus \mathfrak{c}^\ast]^\Gamma$ is generated by invariants bihomogeneous of degrees $(4,0)$, $(0,4)$ and $(1,1)$ which obviously have lifts to $\R[V]^K$ since
$V\otimes_\R\C\simeq V\oplus V$.
Hence  $\R[\mathfrak{c}]^\Gamma$ lifts to  $\R[M]^K$  and $\mathfrak{c}\to M$ induces a
$\Z$-graded regular diffeomorphism of $\mathfrak{c}/\Gamma$ and $M_0=M/K$.

Now $\langle v,v\rangle\neq 0$ where $\langle\, ,\rangle$ is the hermitian form on $V$. Moreover, if $f\in\R[M]^K$, then $df(0)=0$ and  $df(v)$ annihilates $\mathfrak k\cdot v$ which is the perpendicular subspace to $\mathfrak{c}$ in $T_vM$. One has the analogous result at any nonzero point of $\mathfrak{c}$. Hence $\mathfrak{c}\to M$ induces a
$\Z$-graded regular symplectomorphism of $\mathfrak{c}/\Gamma$ and $M_0$.
\end{proof}

The symplectic
quotient  associated to $R_1$ is a point, and the quotient  associated to $2R_1$ is one-dimensional, hence polar, and we can use the arguments above
to show that it is $\Z$-graded regularly symplectomorphic to $\C/(\pm 1)$.
The case of $2R_1$ is also handled in
\cite[Examples 7.6 and   7.13]{ArmsGotayJennings}.  Then as the other representations
listed in Equation \eqref{eq:SU2candidates} are treated in Propositions \ref{prop:SU2NonOrbifolds},
\ref{prop:SU2R4Orbifold},   \ref{prop:SU2R3Orbifold} and Remark \ref{rem:R_2} we have completed the proof of
Theorem \ref{thrm:SU2}.

\begin{remark}
We note that the algebra isomorphisms inducing the $\Z$-graded regular symplectomorphisms
described in Propositions \ref{prop:SU2R4Orbifold} and
\ref{prop:SU2R3Orbifold} can be computed explicitly, and were suggested by the observation
that the Hilbert series coincide in \cite{HerbigSeaton}.  In particular, the algorithm
of Bedratyuk \cite{BedratyukSL2} yields the complex $\SL_2$-invariants.  Taking real
and imaginary parts and eliminating using the moment map, one can compute generators for
$\R[M_0]$ in each case.  For $R_3$, this yields one generator of degree $2$ and two of
degree $4$; for $R_4$, this yields three generators of degree $2$ and four of degree $3$.
Generating sets of the same degrees for the corresponding orbifolds can be computed using
standard techniques, and simple computations involving the Poisson brackets can be used to
determine the isomorphisms in terms of the generators.  That the resulting maps are homeomorphisms
of the corresponding (semialgebraic) orbit spaces can determined by comparing the inequalities
(or, more simply, the symmetric matrices determining the inequalities), computed using the methods
of \cite{ProcesiSchwarz}.
\end{remark}


\bibliographystyle{amsplain}
\bibliography{HSS}

\end{document}